\newtheorem{prop}{Proposition}
\newtheorem{cor}[prop]{Corollary}
\newtheorem{theorem}[prop]{Theorem}
\newtheorem{lemma}[prop]{Lemma}
\newtheorem{conjecture}[prop]{Conjecture}
\begin{document}

\title{Free Fibonacci Sequences}
\author{Brandon Avila\\ MIT \and Tanya Khovanova\\ MIT}
\maketitle

\begin{abstract}
This paper describes a class of sequences that are in many ways similar to Fibonacci sequences: given $n$, sum the previous two terms and divide them by the largest possible power of $n$. The behavior of such sequences depends on $n$. We analyze these sequences for small $n$: 2, 3, 4, and 5. Surprisingly these behaviors are very different. We also talk about any $n$. Many statements about these sequences are difficult or impossible to prove, but they can be supported by probabilistic arguments, we have plenty of those in this paper.

We also introduce ten new sequences. Most of the new sequences are also related to Fibonacci numbers proper, not just free Fibonacci numbers. 
\end{abstract}

\section{Introduction}
\label{sec:intro}

John Horton Conway likes playing with the Fibonacci sequence. Instead of summing the two previous terms, he sums them up and then adds a twist: some additional operation. John Conway discussed these sequences with the second author and this is how we got interested in them. The second author already wrote about one class of such sequences called subprime Fibonacci sequences jointly with Richard Guy and Julian Salazar \cite{GKS}. Here we discuss another variation called $n$-free Fibonacci sequences.

An $n$-free Fibonacci sequence starts with any two integers: $a_1$ and $a_2$. After that it is defined by the recurrence $a_k = (a_{k-1} +a_{k-2})/n^i$, where $n^i$ is the largest power of $n$ that is a factor of $a_{k-1} +a_{k-2}$. 

It appears that many other people like twisting Fibonacci sequences. After we started working on this paper and made our calculations we checked, as everyone should, the On-Line Encyclopedia of Integer Sequences (OEIS \cite{OEIS}) and discovered that some $n$-free Fibonacci sequences were already submitted by three other people. Surprisingly, the first sequence submitted was the sequence of 7-free Fibonacci numbers (A078414) entered by Yasutoshi Kohmoto in Dec 2002. After that the sequence of 5-free Fibonacci numbers (A214684) was submitted by John W. Layman in Jul 2012. It followed by the sequence of 4-free Fibonacci numbers (A224382) submitted by Vladimir Shevelev in Apr 2013. As the reader will see very soon 2-free and 3-free Fibonacci numbers do not constitute new sequences. We filled the gap and submitted 6-free Fibonacci numbers (A232666) in Nov 2013. 

In Section~\ref{sec:definitions} we introduce useful facts about Fibonacci numbers. In Section~\ref{sec:2free} we show that all 2-free sequences end in a cycle of length 1. The 3-free Fibonacci sequences are much more complicated and we study them in Section~\ref{sec:3free}. All our computational experiments ended in a cycle of length 3. On the other hand, we show that 3-free sequences may contain arbitrary long increasing substrings. We prove this in Section~\ref{sec:customized}. Nevertheless, we give a probabilistic argument that a 3-free sequence should end in a cycle in Section~\ref{sec:3free}.

The 4-free Fibonacci sequences are vastly different from 2-free and 3-free sequences (Section~\ref{sec:4free}). We did not find a sequence that ends in a cycle: all of them grow in our experiments. The proof that all of them grow seems intractable, but we supply a probabilistic argument that this is the case. Yet 5-free sequence bring something new (see Section~\ref{sec:5free}). They contain sequences that are never divided by 5 and provably grow indefinitely. At the same time 5-free sequences contain cycles too.

We continue with Section~\ref{sec:division-free} where we find other numbers $n$ that provide examples of sequences that never need to be divided by $n$. Now we wonder where the cycles disappeared to and discuss there potential properties in Section~\ref{sec:cycles}. 

We finish with a discussion of our computational results. Section~\ref{sec:growthdivfree} explains why the average growth for some $n$-free sequences is close to the golden ratio and Section~\ref{sec:growthomni} explains the growth behavior for other values of $n$.

\section{Fibonacci Numbers and $n$-free Fibonacci Sequences}\label{sec:definitions}

Let us denote \textit{Fibonacci numbers} by $F_k$. We assume that $F_0=0$ and $F_1=1$. The sequence is defined by the Fibonacci recurrence: $F_{n+1} =F_n + F_{n-1}$ (See A000045). We call an integer sequence $a_n$ \textit{Fibonacci-like} if it satisfies the Fibonacci recurrence: $a_k = a_{k-1} +a_{k-2}$. A Fibonacci-like sequence is similar to the Fibonacci sequence, except it starts with any two integers. The second-famous Fibonacci-like sequence is the sequence of \textit{Lucas numbers} $L_i$ that starts with $L_0=2$ and $L_1=1$: 2, 1, 3, 4, 7, 11, $\ldots$ (See A000032).

An \textit{$n$-free Fibonacci} sequence starts with any two integers: $a_1$ and $a_2$ and is defined by the recurrence $a_k = (a_{k-1} +a_{k-2})/n^i$, where $n^i$ is the largest power of $n$ that is a factor of $a_{k-1} +a_{k-2}$. To continue the tradition we call numbers in the $n$-free Fibonacci sequence that starts with $a_0=0$ and $a_1=1$ \textit{$n$-free Fibonacci numbers}.

In the future we will consider only sequences starting with two non-negative integers. It is not that we do not care about other starting pairs, but positive sequences cover all essential cases. Indeed, if we start with two negative numbers we can multiply the sequence by $-1$ and get an all-positive sequence. If we start with numbers of different signs, the sequence eventually will become the same-sign sequence.

If we start with two zeros, we get an all-zero sequence. So we will consider only sequences that do not have two zeros at the beginning. Note, that a non-negative sequence can have a zero only in one of the two starting positions, never later.

The $n$-free Fibonacci sequence coincides with the Fibonacci-like sequence with the same beginning until the first occurrence of a multiple of $n$ in the Fibonacci-like sequence. 

Given a positive integer $m > 1$, the smallest positive index $k$ for which $n$ divides the $k$-th Fibonacci number $F_k$ is called the \textit{entry point} of $m$ and is denoted by $Z(m)$ (see sequence A001177 of Fibonacci entry points). For example, $Z(10) = 15$ and the 10-free Fibonacci numbers coincide with the Fibonacci numbers for indices $ < 15$.

Now that all the preparation is done, let us take a closer look at the simplest $n$-free Fibonacci sequences: 2-free Fibonacci sequences.

\section{2-free Fibonacci Sequences}\label{sec:2free}

Consider some examples. The sequence that starts with 0, 1 continues as 1, 1, 1, .... The only two 2-free Fibonacci numbers are 0 and 1. The sequence eventually stabilizes, or in other words, turns into a cycle of length 1. Let us look at other starting points. The sequence that starts as 1, 2 continues as 3, 5, 1, 3, 1, 1, and stabilizes at 1. This sequence turns into the same cycle. The sequence that starts as 100, 220, continues as, 5, 225, 115, 85, 25, 55, 5, 15, 5, 5, 5, and stabilizes at 5. It turns into a different cycle, but the length of the cycle is again equal to 1.

\begin{lemma}
Every 2-free Fibonacci sequence eventually turns into a cycle of length 1: $x$, $x$, $x$, $\ldots$, for an odd $x$.
\end{lemma}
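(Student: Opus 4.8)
The plan is to reduce to the case where every term is odd, and then run a monotonicity argument on the running maximum of consecutive terms.

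\emph{Reduction to odd terms.} First I would carry out a short parity analysis of a consecutive pair $(a_k, a_{k-1})$. If both entries are odd, then $a_k + a_{k-1}$ is even, so dividing by the \emph{largest} power of $2$ leaves $a_{k+1}$ odd; hence once two consecutive odd terms occur, every later term is odd and the new pair $(a_{k+1},a_k)$ is again of this type. If the pair has mixed parity, the sum is odd, so $a_{k+1}=a_k+a_{k-1}$ is odd, and depending on the parity of $a_k$ we either land immediately on a pair of two odd terms or stay in the mixed case one more step. If both entries are even, then $a_{k+1}$ is odd (maximality of the power of $2$ again), which lands us in the mixed case. Tracking these transitions shows that within at most two steps we reach a state of two consecutive odd terms, after which the sequence stays odd forever. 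Since the lemma only concerns the tail of the sequence, I may assume from here on that every $a_k$ is odd (and hence, by the remarks preceding the lemma, positive).

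\emph{Monotonicity of the maximum.} With all terms odd, $a_{k+1}=(a_k+a_{k-1})/2^{i}$ with $i\ge 1$, so $a_{k+1}\le (a_k+a_{k-1})/2\le \max(a_k,a_{k-1})$. Setting $M_k:=\max(a_k,a_{k-1})$, this gives $M_{k+1}\le M_k$, so $(M_k)$ is a non-increasing sequence of positive integers and is therefore eventually constant, say $M_k=M\ge 1$ for all $k\ge N$. I then claim the sequence itself is eventually constant equal to $M$: if $a_k<M$ for some $k>N$, then $\max(a_k,a_{k-1})=M$ forces $a_{k-1}=M$, and $\max(a_k,a_{k+1})=M$ forces $a_{k+1}=M$; but then $M=a_{k+1}=(a_{k-1}+a_k)/2^{i}\le (M+a_k)/2<M$, a contradiction. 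Hence $a_k=M$ for all $k\ge N$, and substituting the constant value into the recurrence, $2M/2^{i}=M$, forces $i=1$, i.e.\ $M$ is odd — exactly the claimed eventual cycle $x,x,x,\ldots$ with $x=M$ odd.

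\emph{Where the work is.} There is no deep obstacle here; the two points that need care are (i) making the parity bookkeeping genuinely produce two \emph{consecutive} odd terms rather than merely one odd term, and (ii) observing that it is precisely the \emph{strict} inequality $(a_{k-1}+a_k)/2<M$, available because $a_k<M$, that rules out any cycle of length greater than $1$. Everything else is the standard "bounded non-increasing integer sequence stabilizes" argument.
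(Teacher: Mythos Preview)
Your proof is correct and follows essentially the same strategy as the paper: reduce to the case where all terms are odd, then track the running maximum $M_k=\max(a_k,a_{k-1})$ and use its monotonicity to force stabilization. The paper does this in three lines, asserting that once consecutive terms are odd and unequal the maximum \emph{strictly} decreases, whereas you first show $M_k$ is merely non-increasing and then run a separate contradiction argument to rule out any term below the limiting value $M$; your version is more carefully written and in fact patches a small imprecision in the paper's one-line claim (when $a_{k-1}<a_k$ the maximum need not drop immediately, only after one further step). Your parity reduction is also more explicit than the paper's single sentence ``after the second term all elements of the sequence are odd,'' but the content is the same.
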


\begin{proof}
It is clear that after the second term all elements of the sequence are odd. Consider the maximum of the two consecutive terms of the sequence: $m_k = \max\{a_k,a_{k-1}\}$. If two consecutive terms $a_{k-1}$, $a_k$ of the sequence are odd and not equal to each other, then the maximum decreases: $m_{k+1} < m_k$. Hence, the sequence has to stabilize.
\end{proof}

It follows from the proof that for a sequence starting with $a_1$, $a_2$ the number of steps until the cycle is reached is not more than $\max\{a_1,a_2\}$. On the other hand, the subsequence before the cycle can be arbitrary long. It follows from the following lemma.

\begin{lemma}
For any two odd numbers $a_1$, $a_2$, a preceding odd number $a_0$ can be found so that $a_0$, $a_1$, and $a_2$ form a 2-free Fibonacci sequence.
\end{lemma}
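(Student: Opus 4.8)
The plan is to construct $a_0$ directly by reversing the $n$-free recurrence. Recall that the $n$-free recurrence for $n=2$ takes $a_{k-1},a_k$ and produces $a_{k+1}=(a_{k-1}+a_k)/2^i$ where $2^i$ is the largest power of $2$ dividing $a_{k-1}+a_k$. So given the target odd pair $a_1,a_2$, I need an odd $a_0$ such that, when the recurrence is applied to $a_0,a_1$, the result is exactly $a_2$; that is, $a_0+a_1 = 2^i a_2$ for some $i\ge 0$, with the additional constraint that $2^i$ really is the full power of $2$ in $a_0+a_1$. Since $a_1$ and $a_2$ are odd, choosing $a_0 = 2^i a_2 - a_1$ makes $a_0$ odd automatically (odd minus odd is even... wait), so I must be careful about parity.

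The key observation is the parity bookkeeping. If $a_0$ is odd and $a_1$ is odd, then $a_0+a_1$ is even, so at least one division by $2$ occurs, meaning $i\ge 1$. Conversely I want $a_0+a_1 = 2^i a_2$ with $a_2$ odd; since $a_2$ is odd, the $2$-adic valuation of $2^i a_2$ is exactly $i$, so the ``largest power'' condition is satisfied for free as long as $i\ge 1$. Thus I simply set $a_0 = 2a_2 - a_1$ (taking $i=1$): then $a_0$ is odd (even minus odd), $a_0 + a_1 = 2a_2$ has $2$-adic valuation exactly $1$ since $a_2$ is odd, so the recurrence gives $a_0,a_1 \mapsto (a_0+a_1)/2 = a_2$, exactly as required. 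One should also check $a_0$ can be taken positive (or at least that the hypothesis only requires ``odd''): if $2a_2 - a_1 \le 0$ one can instead use $a_0 = 2^i a_2 - a_1$ for larger $i$, which is still odd and eventually positive.

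The main steps in order: (1) state the candidate $a_0 = 2a_2 - a_1$; (2) verify $a_0$ is odd, using that $a_1,a_2$ are odd; (3) compute $a_0 + a_1 = 2a_2$ and note its $2$-adic valuation is exactly $1$ because $a_2$ is odd; (4) conclude the $2$-free recurrence sends $(a_0,a_1)$ to $a_2$, so $a_0,a_1,a_2$ is a genuine $2$-free Fibonacci sequence; (5) optionally, replace $i=1$ by a larger exponent to guarantee positivity if desired. There is essentially no obstacle here — the only thing to watch is the interaction between the parity of the chosen $a_0$ and the requirement that $2^i$ be the \emph{full} power of $2$ dividing $a_0+a_1$, but this is automatic precisely because $a_2$ is odd. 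Iterating this construction then produces arbitrarily long odd pre-periods, which is the point the authors want for the remark preceding the lemma.
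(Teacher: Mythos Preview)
Your proof is correct and uses essentially the same construction as the paper: set $a_0 = 2^k a_2 - a_1$ for a suitable $k \ge 1$, which is odd because $2^k a_2$ is even and $a_1$ is odd, and which satisfies $a_0 + a_1 = 2^k a_2$ with $2$-adic valuation exactly $k$ since $a_2$ is odd. The only difference is cosmetic: the paper goes straight to a $k$ large enough to force $a_0 > 0$, while you first present the $k=1$ case and then remark that larger $k$ handles positivity; you also spell out the parity and valuation checks that the paper leaves implicit.
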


\begin{proof}
Pick a positive integer $k$ so that $2^k a_2 > a_1$ and set $a_0$ to be equal to $2^k a_2 - a_1$.
\end{proof}

There are many ways to build predecessors to a given 2-free Fibonacci sequence. The minimal such sequence is built when we choose the smallest power of 2 that still allows us to have positive members in the sequence. We explicitly build such an example starting with $a_1=3$, and $a_2=1$. Reversing the indexing direction we get: 1, 3, 1, 5, 3, 7, 5, 9, 1, $\ldots$, which is now sequence A233526.

Next, we want to continue with 3-free Fibonacci sequences. Are they as simple as 2-free sequences?

\section{3-free Fibonacci Sequences}\label{sec:3free}

Let us look at 3-free Fibonacci sequences. Consider an example of 3-free Fibonacci  numbers: 0, 1, 1, 2, 1, 1, 2, and so on. The sequence turns into a cycle of length 3. There are only 3 different 3-free Fibonacci numbers.

We can multiply a 3-free sequence by a number not divisible by 3 to get another 3-free sequence. Thus, in general we can get cycles of the form $k$, $k$, $2k$, where $k$ is not divisible by 3.

\begin{lemma}
Any cycle of length 3 in a 3-free Fibonacci sequence is of the form $k$, $k$, $2k$.
\end{lemma}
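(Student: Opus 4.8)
The plan is to write the three terms of the cycle as consecutive entries $a$, $b$, $c$ (so that $c$ is obtained from $a,b$ by the recurrence, then $a$ from $b,c$, then $b$ from $c,a$, and the pattern repeats), and to exploit the cyclic symmetry together with a size estimate on the largest term. First I would record two easy structural facts. Every term of a cycle is obtained by applying the recurrence, so it is the sum of the two previous terms divided by the \emph{largest} power of $3$ dividing that sum; hence no term of a cycle is divisible by $3$. Moreover, a cycle cannot contain $0$: a non-negative $n$-free sequence has a zero only possibly in its first two positions, never later, and a cycle has no such positions. So $a,b,c$ are positive integers coprime to $3$, and we may write the three instances of the recurrence as $3^{i_1}c=a+b$, $3^{i_2}a=b+c$, $3^{i_3}b=c+a$ for suitable integer exponents $i_1,i_2,i_3\ge 0$.

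Next, using the cyclic symmetry I may relabel so that $c=\max\{a,b,c\}$. From $a\le c$ and $b\le c$ we get $3^{i_1}c=a+b\le 2c$, hence $3^{i_1}\le 2$, which forces $3^{i_1}=1$, i.e. $i_1=0$ and $c=a+b$. Since $a,b$ are positive, this also shows $c$ is the \emph{strict} maximum, so there is no ambiguity in the choice of labelling.

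Now substitute $c=a+b$ into the remaining two recurrences $3^{i_2}a=b+c$ and $3^{i_3}b=c+a$, obtaining $(3^{i_2}-1)a=2b$ and $(3^{i_3}-1)b=2a$. Multiplying these and cancelling the positive quantity $ab$ gives the small Diophantine equation $(3^{i_2}-1)(3^{i_3}-1)=4$. This is the crux of the argument: among the factorizations $4=1\cdot 4=2\cdot 2=4\cdot 1$, the only one in which both factors have the form $3^{i}-1$ is $2\cdot 2$, so $i_2=i_3=1$. Substituting back, $(3-1)a=2b$ yields $a=b$, and then $c=a+b=2a$; putting $k=a$ exhibits the cycle as $k,k,2k$, with $k$ automatically coprime to $3$.

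As for difficulty, every step is elementary; the only mildly delicate points are justifying that the entries of a cycle are coprime to $3$ and nonzero, and deducing $3^{i_1}=1$ from the maximality of $c$ — the latter being exactly what lets the subsequent algebra collapse. The idea I would want to isolate as the main step is multiplying the two leftover recurrences to reduce everything to the equation $(3^{i_2}-1)(3^{i_3}-1)=4$, whose finitely many solutions in powers of $3$ pin down the cycle completely.
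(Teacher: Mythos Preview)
Your proof is correct, and it reaches the same conclusion by a genuinely different route from the paper. The paper identifies the vanishing exponent via a residue argument: since none of $a,b,c$ is divisible by $3$, two of them are congruent modulo $3$ (pigeonhole), so their sum is not a multiple of $3$, forcing the corresponding exponent to be $0$. You instead use the size of the maximum term: if $c=\max\{a,b,c\}$ then $3^{i_1}c=a+b\le 2c$ forces $i_1=0$. Both arguments are equally short; yours has the small bonus of immediately showing the maximum is strict.

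The finishing steps also diverge. The paper substitutes $c=a+b$ into the remaining two relations, \emph{adds} them to get $a+b=3^{y-1}a+3^{z-1}b$, and then uses $3\nmid a+b$ to force $y=z=1$. You substitute to get $(3^{i_2}-1)a=2b$ and $(3^{i_3}-1)b=2a$, \emph{multiply} to obtain $(3^{i_2}-1)(3^{i_3}-1)=4$, and observe that the only factorization with both factors of the form $3^i-1$ is $2\cdot 2$. Your reduction to a finite Diophantine check is arguably the cleaner of the two; the paper's additive version keeps the mod-$3$ theme running throughout. Either way the algebra is trivial once one exponent is pinned to zero, and both proofs correctly note that all cycle entries are positive and coprime to $3$.
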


\begin{proof}
Consider the length 3 cycle $a$, $b$, $c$. From the definition of 3-free Fibonacci sequences, we know the following relations:

\begin{eqnarray}
a+b=3^xc \nonumber \\
b+c=3^ya \label{eq:b+c}\\
c+a=3^zb. \label{eq:c+a}
\end{eqnarray}

Furthermore, no term in the sequence is divisible by 3. Then, by the pigeon-hole principle, at least two of the terms $a,b,c$ must be congruent modulo 3. Without loss of generality, take $a \equiv b$ (mod 3). Then $a+b \not\equiv 0$ (mod 3), so we have that $x=0$ and $a+b=c$. Now substitute for $c$ and add equations~(\ref{eq:b+c}) and (\ref{eq:c+a}) to get that $a+b=3^{y-1}a+3^{z-1}b$. Since $3\nmid a+b$, either $y=1$ or $z=1$. If $y=1$, then $b=3^{z-1}b$, hence $z=1$. Similarly, $z=1$ implies $y=1$. In either case, $y=z=1$. Then we may solve for our initial variables to show that $a=b$ and $c=a+b$. Restated, $a=k$, $b=k$, and $c=2k$.
\end{proof}

\begin{cor}
The number $k$ in the cycle is the greatest common divisor of the sequence.
\end{cor}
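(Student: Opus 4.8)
The plan is to establish two facts about the given 3-free Fibonacci sequence: that $k$ divides every one of its terms, and that $k$ is itself one of the terms. Granting these, if $d$ denotes the greatest common divisor of all the terms, then $k \mid d$ from the first fact and $d \mid k$ from the second, so $d = k$. The second fact is immediate: the sequence eventually enters a length-3 cycle, which by the previous lemma must have the form $k$, $k$, $2k$, so $k$ occurs among the terms.

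For the first fact I would argue by descending induction on the index. Pick an index $N$ large enough that the sequence has already settled into its cycle, so that $a_N, a_{N+1}, a_{N+2}, \ldots$ reads $k, k, 2k, k, k, 2k, \ldots$; in particular $k \mid a_m$ for every $m \ge N$, which takes care of all sufficiently late terms. For the inductive step, the defining recurrence gives $a_{m-1} + a_{m-2} = 3^{\,i} a_m$ for some integer $i \ge 0$, hence
\[
a_{m-2} = 3^{\,i} a_m - a_{m-1}.
\]
So whenever $k \mid a_m$ and $k \mid a_{m-1}$ we also get $k \mid a_{m-2}$. Running this from $m = N+1$ down to $m = 3$ shows $k \mid a_m$ for all $m \ge 1$.

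I do not expect a genuine obstacle; the argument is short once the cycle shape from the previous lemma is in hand. The only points needing a little care are interpretive: ``the greatest common divisor of the sequence'' should be read as the gcd of the set of all its terms (a possible leading $0$ is harmless, since every integer divides $0$), and one should observe that --- in contrast to the proof of the previous lemma --- the hypothesis $3 \nmid k$ plays no role here, the descending identity $a_{m-2} = 3^{\,i} a_m - a_{m-1}$ doing all the work.
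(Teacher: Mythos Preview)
Your argument is correct and follows essentially the same route as the paper: show that $k$ divides every term via the recurrence, then use that $k$ itself occurs in the sequence to conclude $k=\gcd$. Your version is in fact a bit cleaner, since by running the induction purely in the descending direction via $a_{m-2}=3^{i}a_m-a_{m-1}$ you avoid any appeal to $3\nmid k$, whereas the paper's loosely stated ``Fibonacci additive property'' implicitly relies on coprimality for the forward direction.
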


\begin{proof}
Because of the Fibonacci additive property, if any number divides two or more elements of the sequence (excluding the first two, which may be divisible by 3), it must divide all numbers in the sequence. Thus, $k$ must divide every element. The least of these elements, then, can only be $k$ itself, making it the greatest common divisor.
\end{proof}

Will it be the case that all 3-free Fibonacci sequences end in cycles of length 3? We will build suspense by delaying this discussion, meanwhile we have a lemma about the length of any potential cycle:

\begin{lemma}\label{thm:parity}
Any cycle in a 3-free Fibonacci sequence is of length $3n$ for some integer $n$.
\end{lemma}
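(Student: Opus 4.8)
The plan is to analyze the sequence modulo 3. Since no term in a cycle is divisible by 3, every term is congruent to either $1$ or $2$ modulo 3. So a cycle corresponds to a periodic binary string over the alphabet $\{1,2\}$ under the rule: the next term's residue is determined by the previous two via addition, \emph{unless} the sum is divisible by 3, in which case the division by a power of 3 occurs and we need to track what happens. The key observation is that the division by $3^i$ does not change the residue class once we extract all factors of 3: if $a + b \equiv 0 \pmod 3$, then $(a+b)/3^i$ is coprime to 3, so its residue is $1$ or $2$, but crucially we cannot immediately say which. However, we only care about when $a+b \equiv 0$, i.e. when $\{a \bmod 3, b \bmod 3\} = \{1,2\}$, versus when $a \equiv b$.

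First I would set up the residue dynamics carefully. Write $r_k = a_k \bmod 3 \in \{1,2\}$. There are two cases at each step. Case A: $r_{k-1} = r_{k-2}$ (both $1$ or both $2$); then $r_{k-1} + r_{k-2} \not\equiv 0$, no division happens, and $r_k = r_{k-1} + r_{k-2} \bmod 3$, which equals $2$ if both were $1$ and equals $1$ if both were $2$. So from state $(1,1)$ we go to $(1,2)$, and from $(2,2)$ we go to $(2,1)$. Case B: $r_{k-1} \neq r_{k-2}$, i.e. the pair is $(1,2)$ or $(2,1)$; then $a_{k-1}+a_{k-2} \equiv 0 \pmod 3$, so division by $3^i$ with $i \geq 1$ occurs, and $r_k$ can be either $1$ or $2$ depending on the actual integers (this is the genuinely non-deterministic-looking step). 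I would argue that in a length-$\ell$ cycle, the number of indices where Case B applies must be accounted for by counting factors of 3 around the cycle: multiplying all the relations $a_k \cdot 3^{i_k} = a_{k-1} + a_{k-2}$ around the cycle, or better, tracking the $3$-adic valuation, shows that the total power of 3 removed over one full period is balanced — but a cleaner route is simply to count residue transitions.

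The cleanest argument: consider the cyclic sequence of residues $r_1, r_2, \ldots, r_\ell, r_1, r_2, \ldots$. I would count the number $N$ of indices $k$ (mod $\ell$) where $r_k \neq r_{k+1}$, i.e. the number of "descents/ascents" in the cyclic binary word — this number $N$ is necessarily even, since going around a cycle the residue must change an even number of times to return to its start. Now I relate $N$ to the structure. In Case A at position $k$ (meaning $r_{k-2}=r_{k-1}$), we produce $r_k \neq r_{k-1}$, so this always contributes a change between positions $k-1$ and $k$; and since $r_{k-2} = r_{k-1}$, there is no change between $k-2$ and $k-1$. In Case B ($r_{k-2} \neq r_{k-1}$, a change between $k-2$ and $k-1$) the outcome $r_k$ is unconstrained. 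The idea is to show a rigidity: that the pattern of residues around any cycle must be the repeating block $1,1,2$ (up to scaling/rotation), forcing $\ell$ to be a multiple of 3. To see this, observe that after a Case A step at $k$ we are in a "different" pair $(r_{k-1}, r_k)$ with $r_{k-1} \neq r_k$, so the next step $k+1$ is necessarily Case B; and I would show the Case B step must return to a pair of equal residues — here is where the actual divisibility must be invoked. Specifically, I claim that in a cycle, two consecutive Case B steps cannot occur: if $r_{k-2} \neq r_{k-1}$ and also $r_{k-1} \neq r_k$, then $r_{k-2} = r_k$, and I'd derive a contradiction with the cycle relations by a valuation/size argument analogous to the length-3 lemma (summing the cycle equations and using minimality of the gcd). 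Granting that, the residue word decomposes into blocks "$xx\bar x$" ($x \in \{1,2\}$, $\bar x$ the other value), each of length 3, so $\ell = 3n$.

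The main obstacle I expect is precisely the last claim — ruling out two consecutive divisions by 3 within a cycle, or more generally pinning down the residue after a Case B step. This is not a pure mod-3 statement; it requires using that the terms form a genuine cycle of positive integers with a common gcd (the Corollary), so that the total 3-adic valuation removed around the cycle is zero while each Case B step removes a positive amount — wait, that's automatically consistent, so instead one must argue more carefully, perhaps by summing the relations $3^{i_k} a_k = a_{k-1} + a_{k-2}$ telescopically around the cycle and bounding, exactly as in the proof that length-3 cycles have the form $k,k,2k$. Making that bounding argument work uniformly for all cycle lengths, rather than just $\ell = 3$, is the crux; I would try to mimic the pigeonhole-plus-valuation structure of the $\ell=3$ proof, showing that "most" of the exponents $i_k$ must be zero and the non-zero ones are forced to equal 1 and to occur in a strictly periodic pattern.
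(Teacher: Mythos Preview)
Your approach works modulo $3$, but this is the wrong modulus: because the recurrence divides by powers of $3$, the residue after a division step is genuinely undetermined by the previous residues, and you are then forced to prove a strong auxiliary claim (``no two consecutive divisions can occur inside a cycle'') that you do not actually establish. You sketch a valuation/size argument ``analogous to the length-3 lemma,'' but that lemma relied on there being only three equations to juggle; for a general cycle length $\ell$ the telescoping/bounding you propose does not obviously close, and in fact proving that consecutive divisions are forbidden in a cycle is tantamount to a structural classification of all $3$-free cycles --- far stronger than what the lemma asserts.

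The paper sidesteps all of this by working modulo $2$ instead. The point is that dividing by any power of $3$ preserves parity, so the sequence of parities obeys the \emph{ordinary} Fibonacci recurrence mod $2$, with no non-determinism at all. After dividing the cycle through by the largest power of $2$ in its gcd, some term is odd, so the parity sequence is not identically $0$ and must therefore follow the unique non-trivial mod-$2$ Fibonacci cycle $1,1,0,1,1,0,\ldots$, which has period $3$. Hence the cycle length is a multiple of $3$. This is a two-line argument once you pick the right modulus; your mod-$3$ route, even if it could be completed, proves much more than needed and leaves the hardest step open.
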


\begin{proof}
Begin with any 3-free Fibonacci sequence, and divide out the highest power of 2 in the GCD of all its elements. The resulting sequence is a 3-free Fibonacci sequence with at least one odd element. It is clear that dividing or multiplying any number by 3 does not change its parity. Thus, any sequence, regardless of how many factors of 3 are divided out from each term, will have the same underlying structure in its parity. Since we have reduced the sequence to the point where there exists at least one odd number, we know that the sequence reduced modulo 2 must be congruent to 1, 1, 0, 1, 1, 0. Only cycles of length $3n$ are permitted in this structure, and therefore permitted in 3-free sequences.
\end{proof}

We checked all the starting pairs of numbers from 1 to 1000, and all these sequences end in a 3-cycle.

The 3-free Fibonacci sequences are in many ways similar to the notorious Collatz sequences \cite{La10}, for which it is still not known if every sequence eventually cycles. We do not expect that it is easy to prove or disprove that ever 3-free Fibonacci sequence ends in a cycle. On the other hand, it is possible to make probabilistic arguments to support different claims about free Fibonacci sequences.

Here is the base of the argument. Suppose we encounter a number in a sequence that is divisible by 3, then after removing all powers of 3, let us assume that the resulting number has the remainder 1 modulo 3 with probability $1/2$. If the sum of two consecutive terms in a sequence is large and divisible by 3, then we also assume that this number is divisible by $3^k$ with probability $1/3^{k-1}$.

How often do we divide by 3 in a 3-free Fibonacci sequence? The following lemma is obvious.

\begin{lemma}
In a 3-free Fibonacci sequence the division happens for every term or for every other term.
\end{lemma}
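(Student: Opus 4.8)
The plan is to reduce the whole question to arithmetic modulo $3$. First I would record the structural fact that for every index $k\ge 3$ the term $a_k$ is coprime to $3$, since by construction every factor of $3$ in $a_{k-1}+a_{k-2}$ has been divided out; hence, apart from the first two terms, the sequence reduced modulo $3$ takes values only in $\{1,2\}$. (If one of the first two terms happens to be a multiple of $3$, the pattern starts irregularly for a step or two; this affects only finitely many terms and is not the substance of the lemma, so I would state the argument for $k$ large enough that $a_{k-1}$ and $a_{k-2}$ are both coprime to $3$.)

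Next I would translate ``the division happens at step $k$'' into a congruence. By definition the division is nontrivial at step $k$ exactly when $3 \mid a_{k-1}+a_{k-2}$. The elementary observation is that for residues $x,y\in\{1,2\}$ one has $x+y\equiv 0 \pmod 3$ if and only if $x\ne y$ (indeed $1+2\equiv 0$, while $1+1\equiv 2$ and $2+2\equiv 1$). Therefore, whenever $a_{k-1}$ and $a_{k-2}$ are both coprime to $3$, the division happens at step $k$ if and only if $a_{k-1}\not\equiv a_{k-2}\pmod 3$; equivalently, the division fails at step $k$ precisely when $a_{k-1}\equiv a_{k-2}\pmod 3$.

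Finally I would obtain the lemma by excluding two consecutive ``no-division'' steps. Suppose the division failed at both step $k$ and step $k+1$. Failure at step $k$ gives $a_{k-1}\equiv a_{k-2}\pmod 3$, and since nothing was divided out there, $a_k=a_{k-1}+a_{k-2}\equiv 2a_{k-1}\pmod 3$. Failure at step $k+1$ gives $a_k\equiv a_{k-1}\pmod 3$. Combining the two, $2a_{k-1}\equiv a_{k-1}\pmod 3$, i.e. $a_{k-1}\equiv 0\pmod 3$, contradicting that $a_{k-1}$ is coprime to $3$. Hence no two consecutive terms can both escape division, which is exactly the statement that the division occurs either at every term or at every other term.

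As for difficulty: there is essentially no obstacle once one passes to residues modulo $3$ — this is why the lemma is called obvious. The only point requiring a little care is the bookkeeping around the first one or two terms, where an initial multiple of $3$ makes the dichotomy start late; from $a_3$ onward everything is governed by the congruence criterion above.
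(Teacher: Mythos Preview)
Your argument is correct. The paper itself offers no proof beyond declaring the lemma ``obvious,'' and your reduction modulo $3$ is exactly the computation that justifies that word: once past the first two terms every $a_k$ lies in $\{1,2\}$ modulo $3$, two equal residues sum to a nonzero residue, and the resulting $2a_{k-1}\equiv a_{k-1}$ contradiction rules out two consecutive non-division steps. This is the natural (and essentially the only) route, and your handling of the possibly irregular first couple of terms is adequate; note that the paper's own gloss immediately after the lemma --- ``we can not have a subsequence of length $3$ such that each term is the sum of the previous two terms'' --- is precisely the conclusion you establish.
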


In other words, we can not have a subsequence of length 3 such that each term is the sum of the previous two terms. We want to study two polar cases first: stretches where we divide every term and stretches were we divide every other term.

We will call a subsequence of a 3-free Fibonacci sequence where we divide at each step a \textit{division-rich subsequence}. Conversely, we will call a subsequence of a 3-free Fibonacci sequence where we divide at every other step a \textit{division-poor subsequence}. 

\begin{lemma}\label{thm:rich}
There exist arbitrary long division-rich subsequences.
\end{lemma}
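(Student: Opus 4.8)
The plan is to construct, for any target length $N$, a 3-free Fibonacci sequence whose first $N+1$ terms form a division-rich subsequence, by running the recurrence backwards — exactly in the spirit of the construction of 2-free predecessors above. Concretely, I would start from the right end with two positive integers $b_N, b_{N-1}$ that are both coprime to $3$ (for instance $b_N = b_{N-1} = 1$), and then for $k = N, N-1, \ldots, 2$ define the earlier term by
\[
b_{k-2} \;=\; 3^{\,i_k} b_k - b_{k-1},
\]
where $i_k \ge 1$ is chosen large enough that $3^{\,i_k} b_k > b_{k-1}$, so that $b_{k-2}$ is a positive integer. Reading the resulting list $b_0, b_1, \ldots, b_N$ from left to right, each step computes $b_k$ from $b_{k-1} + b_{k-2} = 3^{\,i_k} b_k$, so a power of $3$ is divided out at every step.

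The key points to verify are that this is genuinely a 3-free Fibonacci sequence and that division is not merely possible but forced at each step. Both follow from a single reduction mod $3$: since $i_k \ge 1$ we have $b_{k-2} \equiv -b_{k-1} \pmod 3$, so $3 \nmid b_{k-2}$ whenever $3 \nmid b_{k-1}$; as the two seed terms are coprime to $3$, induction gives that every $b_j$ is coprime to $3$. Consequently the largest power of $3$ dividing $b_{k-1} + b_{k-2} = 3^{\,i_k} b_k$ is exactly $3^{\,i_k}$ (because $3 \nmid b_k$), which is precisely what the defining recurrence of a 3-free Fibonacci sequence requires, and $i_k \ge 1$ guarantees that a division genuinely occurs. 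Hence $b_0, b_1, \ldots, b_N$ is a division-rich subsequence of length $N+1$, and since $N$ is arbitrary the lemma follows.

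There is no real obstacle here; the only thing to be careful about is the bookkeeping of the backward construction — arranging positivity (which is why $i_k$ may need to exceed $1$) and checking the ``largest power'' clause rather than assuming it. If one additionally wants the division-rich stretch to sit in the interior of a longer 3-free sequence rather than at its very beginning, one can continue the same backward step with freely chosen exponents (including exponent $0$ to insert ordinary, non-dividing Fibonacci steps), but this refinement is not needed for the statement as phrased.
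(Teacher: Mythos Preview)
Your proof is correct and uses exactly the same backward-construction idea as the paper: start from two seed terms and repeatedly set $b_{k-2} = 3^{i_k} b_k - b_{k-1}$ with $i_k \ge 1$ chosen large enough to keep everything positive. If anything you are more careful than the paper, since you explicitly verify via the mod-$3$ induction that every $b_j$ is coprime to $3$, so that $3^{i_k}$ is genuinely the \emph{largest} power of $3$ dividing $b_{k-1}+b_{k-2}$ --- a point the paper's proof leaves implicit.
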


\begin{proof}
The proof is done by explicit construction. Consider the definition of a division-rich subsequence. In this case, we divide by a power of 3 after every addition step, so that $3^{i_n} \cdot a_n = a_{n-1}+a_{n-2}$ for $i_n>0$. Equivalently, $a_{n-2}=3^{i_n} \cdot a_n - a_{n-1}$. Thus, by choosing $a_n$ and $a_{n-1}$, and selecting a sequence $\{i_m\}$ that satisfies this relationship, the sequence can easily be constructed backwards. Our only requirements are that every term of the sequence is positive, and every step contains a division, so it will suffice to construct a sequence  $\{i_m\}$ such that $3^{i_n} \cdot a_n - a_{n-1}>0$ and $i_n>0$ for all $n$.
\end{proof}

As an example, let us begin with $a_n=1$, $a_{n-1}=1$. At each step let us choose the smallest possible power for $i_m$. Then $i_n = 1$ satisfies our inequality for the first step, and $a_{n-2}=2$. Continuing, $i_{n-1}=1$ satisfies the inequality for the next step, yielding $a_{n-3}=1$. Next, $i_{n-2}=1$ and $a_{n-3}=1$, followed by $i_{n-3}=2$ and $a_{n-4}=5$. Reading the sequence backwards we get: 1, 1, 2, 1, 5, 4, 11, 1, 32, 49, $\ldots$. This is now sequence A233525 in the OEIS \cite{OEIS}. when read forward, this sequence is a 3-free sequence containing eight divisions in a row. The process can be continued to arbitrarily many terms for arbitrarily many consecutive divisions.

The growth bound for division-rich subsequences is estimated by the following lemma.

\begin{lemma}
For a division-rich subsequence: $\max\{a_{2k+1},a_{2k+2}\} \leq 2\max\{a_{2k-1},a_{2k}\}/3$.
\end{lemma}

\begin{proof}
We can estimate that $a_{2k+1} \leq (a_{2k-1}+a_{2k})/3 \leq 2\max\{a_{2k-1},a_{2k}\}/3$ and $a_{2k+2} \leq (a_{2k}+a_{2k+1})/3 \leq 5\max\{a_{2k-1},a_{2k}\}/9 < 2\max\{a_{2k-1},a_{2k}\}/3$. 
\end{proof}

So we can expect that with probability $1/2^n$ there would be a subsequence of length $2n$, where the maximum of the next two terms does not exceed the maximum of the previous two terms by $2/3$. Clearly it cannot go down forever. We need to start with very large numbers to get a long stretch of a division-rich subsequence.

\begin{lemma}\label{thm:poor}
There exist arbitrary long division-poor subsequences.
\end{lemma}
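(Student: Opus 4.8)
The plan is to mirror the explicit backward construction used for division-rich subsequences, but now arrange for divisions to land on alternate steps. A \emph{division-poor} subsequence is, by definition, one where every other addition produces a number divisible by $3$ and the intermediate one does not. So I will build the sequence backwards, choosing terms in pairs: given two consecutive terms, the next term back is their Fibonacci sum (no division), and the term before that is obtained from a sum that \emph{must} be a multiple of $3$ after which I divide by the appropriate power. Concretely, if I want the pattern (reading forward) ``add, divide, add, divide, $\dots$'', then working backward I alternately set $a_{n-2} = a_{n-1} + a_n$ on the ``add'' steps and $a_{n-2} = 3^{i} a_n - a_{n-1}$ on the ``divide'' steps, exactly as in Lemma~\ref{thm:rich}.

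First I would set up the bookkeeping for the congruence constraints, which is the one genuinely new ingredient compared to the division-rich case. On an ``add'' step the new term must \emph{not} be divisible by $3$; on the following ``divide'' step the relevant sum must be divisible by $3$. Since modular arithmetic is preserved under the Fibonacci recurrence and unaffected by dividing out powers of $3$ (as already noted in the proof of Lemma~\ref{thm:parity}), I only need to track the pair $(a_{n-1}, a_n) \bmod 3$ and check that the alternating pattern of divisibility/non-divisibility is consistent. The residue pattern that makes ``divide at every other step'' work is, up to scaling, $1, 2, 0, 2, 1, 0, \dots$ modulo $3$ — i.e. the reduction of the $3$-free Fibonacci numbers themselves — so I would exhibit a short periodic residue template and verify it has the required divide/no-divide pattern, then ensure each backward step stays within that template.

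Second I would handle positivity, just as in the division-rich case: on each ``divide'' step I have the freedom to choose the exponent $i_n > 0$, and taking it large enough guarantees $3^{i_n} a_n - a_{n-1} > 0$. The ``add'' steps automatically produce positive terms from positive terms. So by induction the whole backward construction yields a legitimate $3$-free Fibonacci sequence of positive integers, and I can make the division-poor stretch as long as I like by continuing the backward construction; reading it forward gives the claimed subsequence. It would be natural to close with a concrete worked example (analogous to the $1, 1, 2, 1, 5, 4, 11, \dots$ example after Lemma~\ref{thm:rich}), computing a handful of terms backward from, say, $a_n = a_{n-1} = 1$ while choosing smallest admissible exponents.

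The main obstacle is the congruence management: unlike the division-rich case, here one cannot simply take the smallest exponent at every step and forget about residues, because a ``divide'' step might accidentally fail to be a multiple of $3$ (forcing a division where none is wanted) or an ``add'' step might accidentally produce a multiple of $3$ (forcing an unwanted division). So the real content of the proof is showing that one can stay on a residue cycle that enforces exactly the alternating pattern — essentially observing that scalar multiples of the $3$-free Fibonacci numbers already exhibit this pattern and that the backward construction can be steered to remain congruent to such a template at every step. Once that invariant is in place, positivity and the induction are routine.
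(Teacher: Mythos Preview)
Your backward recurrence for the ``add'' steps is wrong, and this is not a harmless slip: it is exactly the place where the argument breaks. If the forward rule at a non-division step is $a_k = a_{k-1} + a_{k-2}$, then going backward gives $a_{k-2} = a_k - a_{k-1}$, a \emph{subtraction}, not $a_{k-2} = a_{k-1} + a_k$. With the correct formula, your claim that ``the `add' steps automatically produce positive terms from positive terms'' is false: nothing stops $a_{k-1}$ from exceeding $a_k$. Worse, the freedom you invoke on the adjacent ``divide'' step pulls in the wrong direction---choosing a large exponent there makes the neighbouring term large, which then makes the next backward subtraction \emph{more} likely to go negative. So positivity is not routine at all; it is the genuine obstacle, and the paper says as much (``This proof is more complicated than the previous one''). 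The paper's resolution is to give up on positivity during the backward construction, build a possibly-negative sequence with the prescribed signature, and then invoke an adjustment lemma: adding suitable multiples of $n^m$ to the two initial terms (where $n^m$ exceeds the product of all divisors used) preserves the signature and all residues while shifting every term into the positive range.

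Your residue bookkeeping is also off. The terms of a $3$-free sequence (past the seeds) are never $0 \pmod 3$, so a template like ``$1,2,0,2,1,0$'' cannot describe the terms themselves; and the $3$-free Fibonacci numbers $1,1,2,1,1,2,\ldots$ you cite are not division-poor---that cycle has two consecutive divisions out of every three steps. The residue pattern that actually enforces division at every other step is (for the terms) $\ldots,1,1,2,2,1,1,2,2,\ldots$, period~$4$: same residues give a non-multiple of~$3$, opposite residues give a multiple. This part is fixable, but you should state the correct invariant before claiming the construction can be ``steered'' to respect it.
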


This proof is more complicated than the previous one, so we will do it together with a proof of a more powerful theorem in next Section~\ref{sec:customized}.

\begin{lemma}
If we index a division-poor subsequence in such a way that division happens on the odd term, then all the even terms form an increasing subsequence: $a_{2k} > a_{2k-2}$.
\end{lemma}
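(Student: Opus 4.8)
The plan is to unwind the definition of a division-poor subsequence under the stated indexing convention and show that consecutive even terms satisfy the claimed inequality directly. In a division-poor subsequence, division happens at every other step; with the convention that division occurs on the odd-indexed terms, the even-indexed steps are pure Fibonacci additions while the odd-indexed steps involve a genuine division by a positive power of $3$. So I would write, for the even step, $a_{2k} = a_{2k-1} + a_{2k-2}$, and for the preceding odd step, $a_{2k-1} = (a_{2k-2} + a_{2k-3})/3^{i}$ for some $i \ge 1$.

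The key computation is then short. From $a_{2k} = a_{2k-1} + a_{2k-2}$ and the fact that all terms are positive (we restrict to non-negative starting pairs, and after the first two terms no term is zero, so $a_{2k-1} > 0$), we get immediately $a_{2k} = a_{2k-1} + a_{2k-2} > a_{2k-2}$. That is the entire inequality. I would state it cleanly: the even step adds a strictly positive quantity $a_{2k-1}$ to $a_{2k-2}$, hence $a_{2k} > a_{2k-2}$, and since this holds for every valid $k$, the even-indexed terms form a strictly increasing subsequence.

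A couple of small points deserve a sentence each. First, I should note why $a_{2k-1} > 0$: positivity of all terms past the initial pair follows from the convention established earlier in the excerpt that we only consider non-negative sequences and that a zero can occur only in one of the two starting positions. Second, I should confirm that the even step really is a pure addition: this is exactly what "division happens on the odd term" means in a division-poor subsequence, combined with the earlier lemma that in a $3$-free sequence division occurs at every term or every other term, so consecutive additions without division cannot both be even and odd — the pattern is forced to alternate.

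The main obstacle, such as it is, is purely bookkeeping: being careful that the indexing convention is applied consistently so that the even step is genuinely the addition step and not the division step, and making sure the boundary case (the very first even term of the subsequence) still has a positive predecessor $a_{2k-1}$. There is no real analytic difficulty here; once the indices are pinned down, the inequality $a_{2k} = a_{2k-1} + a_{2k-2} > a_{2k-2}$ is immediate from positivity, so the proof is essentially a one-line observation dressed up with the correct definitions.
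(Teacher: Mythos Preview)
Your proof is correct and matches the paper's own argument exactly: the paper simply writes $a_{2k} = a_{2k-1} + a_{2k-2} > a_{2k-2}$ in one line, relying implicitly on positivity. Your additional remarks on why $a_{2k-1}>0$ and why the even step is a pure addition are fine elaborations but not something the paper spells out.
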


\begin{proof}
As every even term is the sum of the previous two terms we get: $a_{2k} = a_{2k-1} + a_{2k-2} > a_{2k-2}$.
\end{proof}

That means that both division-rich and division-poor subsequences can not form a cycle. In particular, it means we can have sequences of arbitrary length without entering a cycle.

We showed that there exist 3-free Fibonacci sequences that have long increasing subsequences. Still, we want to present a probabilistic argument that any 3-free Fibonacci sequence ends in a cycle.

According to our probabilistic assumptions we divide either every term or every other term with the same probability $1/2$. So on average we divide on every $1.5$ step. But for how much we divide on average? 

\begin{lemma}\label{thm:average3}
On average we divide by $3^{3/2}$.
\end{lemma}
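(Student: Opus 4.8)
The plan is to combine the two probabilistic assumptions stated in the paper---that after stripping powers of $3$ a multiple of $3$ is divisible by $3^k$ with probability $1/3^{k-1}$, and that we divide on ``every term'' versus ``every other term'' each with probability $1/2$---and to compute the expected value of $\log_3$ of the dividing factor per \emph{occurrence} of a division. First I would set up the random variable $D$ equal to the exponent $i$ appearing when we write $a_k = (a_{k-1}+a_{k-2})/3^i$ at a step where division actually occurs. By the second assumption $D \ge 1$, and by the first assumption $\Pr[D \ge k+1 \mid D \ge k] = 1/3$ for $k \ge 1$, so $D-1$ is geometric and $\Pr[D = k] = (2/3)(1/3)^{k-1}$ for $k \ge 1$. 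Then $\mathbb{E}[D] = \sum_{k \ge 1} k (2/3)(1/3)^{k-1} = (2/3)\cdot \frac{1}{(1-1/3)^2} = (2/3)\cdot \frac{9}{4} = \frac{3}{2}$.

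Next I would reconcile this with the phrase ``on average we divide by $3^{3/2}$.'' The factor we divide by at a division step is $3^{D}$, so the \emph{expected logarithm} of that factor is $\mathbb{E}[D] = 3/2$, i.e.\ on average the division removes $3^{3/2}$ in the multiplicative sense appropriate for tracking growth rates. Here one should be a little careful about what ``average'' means: the right notion for the subsequent growth heuristic is the average of $\log_3$ of the removed factor (equivalently the geometric mean of the removed factor), not the arithmetic mean of $3^{D}$ itself, which would diverge-ish to $\sum_k 3^k (2/3)(1/3)^{k-1}$—in fact that sum \emph{diverges}, which is exactly why the geometric/logarithmic average is the meaningful quantity. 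I would state this explicitly so the reader understands why $3^{3/2}$ rather than something larger is the operative constant.

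The key steps in order: (1) identify the distribution of the removed exponent $D$ at a division step from the two stated assumptions; (2) recognize $D - 1 \sim \mathrm{Geom}(2/3)$ and hence $\mathbb{E}[D] = 3/2$ via the standard geometric-series identity $\sum_{k\ge 1} k x^{k-1} = (1-x)^{-2}$ with $x = 1/3$; (3) interpret ``divide by $3^{3/2}$ on average'' as $\mathbb{E}[\log_3(\text{removed factor})] = 3/2$, flagging that the arithmetic mean of the factor itself is not what is meant.

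The main obstacle is not the computation, which is a one-line geometric series, but the interpretive/modeling step: making precise which averaging convention is intended and justifying (at the heuristic level the paper operates on) that the geometric mean is the correct statistic to feed into the growth argument of later sections. Since the paper is explicitly content with probabilistic arguments here, I would keep this honest but brief: the quantity $\mathbb{E}[D] = 3/2$ is what governs whether the process tends to shrink, and combined with the earlier observation that division occurs on average every $1.5$ steps, it sets up the comparison of $3^{3/2}$ against the Fibonacci growth factor $\varphi^{3/2}$ (equivalently $\varphi^3$ per $3$ steps versus $3^3$ per $3$ steps) in the cycle heuristic.
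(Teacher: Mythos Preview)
Your argument is correct and matches the paper's: both compute the expected exponent $\mathbb{E}[D]$ of the removed power of $3$ at a division step, the paper via the tail-sum $1+1/3+1/9+\cdots=3/2$ (equivalently the geometric-mean product $3^{2/3}\cdot 9^{2/9}\cdots$), and you via the explicit geometric distribution $\Pr[D=k]=(2/3)(1/3)^{k-1}$. Your added remark that the arithmetic mean of $3^{D}$ diverges and that the geometric/logarithmic average is therefore the meaningful quantity is a worthwhile clarification the paper leaves implicit; the second probabilistic assumption about division frequency is not actually needed for this lemma, only for the surrounding heuristic.
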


\begin{proof}[Probabilistic Argument]
Now we want to use the fact that when we divide by a power of 3 we on average divide by more than 3. If the number is large, we divide by 3 with probability $2/3$, by 9 with probability $2/9$ and so on. So the average division is by 
$$3^{2/3} \cdot 9^{2/9} \cdot 27^{2/27} \cdot \ldots.$$
We can say it differently. 
We can say that we divide by 3 with probability 1 and additionally we divide by 3 more with probability $1/3$, and by 3 more with probability $1/9$, so the result is 3 to the power
$$1+1/3+1/9+1/27 + 1/81 + \ldots = 3/2.$$
Since the above sum is equal to $3/2$, every time we divide, we on average divide by $3^{3/2}$.
\end{proof}

Notice that the average number we divide by is approximately 5.2 which is more than 5.

Let us build a probabilistic sequence that capture some of the behavior of 3-free Fibonacci sequences. We start with two numbers $a_1$ and $a_2$ and flip a coin. If the coin turns heads we add the next number $a_3 = (a_1+a_2)/5$ to the sequence. If the coin turns tails we add two more terms: $a_3 = a_1+a_2$ and $a_4 = (a_2+a_3)/5$ to the sequence. Then repeat. We expect that this sequence on average grows faster than 3-free Fibonacci sequences, because we divide by a smaller number.

Now we want to bound the maximum of the last two terms of this probabilistic sequence after two coin flips. Let $M = \max\{a_1,a_2\}$. We have the following cases:

\begin{itemize}
\item After two heads, the sequence becomes: $a_1$, $a_2$, $(a_1+a_2)/5$, $(a_1+6a_2)/25$. The last two terms do not exceed $2M/5$.
\item After head, tail, the sequence becomes: $a_1$, $a_2$, $(a_1+a_2)/5$, $(a_1+6a_2)/5$, $(2a_1+7a_2)/25$. The last two terms do not exceed $7M/5$.
\item After tail, head, the sequence becomes: $a_1$, $a_2$, $a_1+a_2$, $(a_1+2a_2)/5$, $(6a_1+7a_2)/25$. The last two terms do not exceed $3M/5$.
\item After two tails the sequence becomes: $a_1$, $a_2$, $a_1+a_2$, $(a_1+2a_2)/5$, $(6a_1+7a_2)/5$, $(7a_1+9a_2)/25$. The last two terms do not exceed $13M/5$.
\end{itemize}

As each event happens with the same probability $1/4$, the average growth after $2n$ coin flips is $(2\cdot 3 \cdot 7 \cdot 13)^{1/4}/5$, which is below 0.97. So the overall trend for this sequence is to decrease.

Based on our computational experiments and probabilistic discussions above we conjecture: 

\begin{conjecture}
Any 3-free Fibonacci sequence ends in a cycle.
\end{conjecture}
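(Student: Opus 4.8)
The plan is to deduce the conjecture from a \emph{boundedness} statement and then attack that statement with a Lyapunov‑type argument modelled on the drift of Lemma~\ref{thm:average3}. \emph{Reduction.} As in the proof of Lemma~\ref{thm:parity}, first divide the sequence by the largest power of $2$ dividing the GCD of all its terms; since the GCD of the terms is an invariant of the recurrence (it divides every term by Fibonacci additivity, as in the proof of the Corollary), it suffices to treat the reduced sequence. A $3$-free Fibonacci sequence is a trajectory of the deterministic map $(a_{k-1},a_k)\mapsto(a_k,a_{k+1})$ on non‑negative integer pairs, so once a pair repeats the sequence ends in a cycle. For each bound $B$ there are only finitely many pairs with both entries $\le B$, so it is enough to show that $m_k:=\max\{a_{k-1},a_k\}\le B$ for infinitely many $k$ and some $B$ depending only on $(a_1,a_2)$; a fortiori it suffices to prove $\limsup_k m_k<\infty$.

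\emph{A drift function.} One wants a potential $\Phi$ on pairs, comparable to $m_k$, that contracts geometrically over suitable windows. The naive candidate $\Phi=a_k+\varphi^{-1}a_{k-1}$ with $\varphi=\tfrac{1+\sqrt5}{2}$, arranged so a pure Fibonacci step multiplies $\Phi$ by exactly $\varphi$, does not work: a step with division by $3^i$ multiplies $\Phi$ by a factor \emph{strictly larger} than $\varphi/3^i$, because only one coordinate gets divided. Indeed, along a division‑poor stretch in which every division is by $3$ one computes $3a_{2k+2}=5a_{2k}-a_{2k-2}$, so $m_k$ genuinely grows, by a factor tending to $\bigl(5+\sqrt{13}\bigr)/6\approx1.434$ per two steps; for divisions by $d=3^i$ the growth only slows to $\tfrac{(2+d)+\sqrt{d^2+4}}{2d}$, which is still $>1$. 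The hoped‑for rescue is that a division‑rich stretch contracts $m_k$ by the factor $2/3$ per two steps (the growth‑bound lemma), so that by the probabilistic balance of Lemma~\ref{thm:average3} the contraction of division‑rich portions dominates the mild expansion of division‑poor ones over long windows, and a cleverer $\Phi$ — one that also charges pairs $(a,b)$ with $3\mid a+b$, i.e.\ those forced into an immediate division — satisfies $\Phi_{k+W}\le c\,\Phi_k$ with $c<1$ once $\Phi_k$ exceeds a threshold.

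\emph{The obstacle.} The displayed inequality is false for any \emph{fixed} window $W$, because by Lemma~\ref{thm:poor} there are arbitrarily long division‑poor stretches along which every max‑like $\Phi$ increases throughout. What must save the argument is the quantitative converse: a division‑poor stretch of length $L$ forces the terms $L$ steps earlier to have been at least $\lambda^L$ times their current size for some fixed $\lambda>1$, so a \emph{bounded} orbit can only contain bounded division‑poor stretches, which makes boundedness self‑consistent with the division‑rich contraction. Making this rigorous amounts to controlling, orbit by orbit, the sequence of exponents $\{i_n\}$ of the powers of $3$ removed — essentially showing the partial sums $\sum_{n\le N}i_n$ outrun $\bigl(\tfrac{2}{3}\log_3\varphi\bigr)N$ in a strong enough sense — and I expect this to be the genuine difficulty. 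It is precisely the phenomenon that makes the Collatz conjecture (referenced after Lemma~\ref{thm:parity}) resist the analogous averaging heuristic: the deterministic orbit could in principle conspire to keep finding ever deeper division‑poor‑reachable valleys, and I see no way to exclude this without a new idea controlling the exponents $\{i_n\}$. Absent such an idea the statement is likely to remain a conjecture, with the probabilistic analysis above as the best available evidence.
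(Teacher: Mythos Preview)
The statement is a \emph{conjecture} in the paper and is not proved there; the paper offers only a probabilistic heuristic --- Lemma~\ref{thm:average3} together with a coin-flip model in which one divides by $5\approx 3^{3/2}$, yielding an average growth factor of about $0.97<1$. Your proposal is likewise not a proof, and you explicitly say so in the final paragraph, so you and the paper agree on the bottom line. The heuristics differ in style: the paper replaces the deterministic map by a random model and computes a mean contraction, while you attempt a genuine Lyapunov argument and pinpoint where it breaks --- no fixed-window potential can contract through the arbitrarily long division-poor stretches guaranteed by Lemma~\ref{thm:poor}, along which you correctly compute growth of roughly $(5+\sqrt{13})/6$ per two steps. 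Your reduction of cycling to boundedness (pigeonhole on pairs) is correct and standard, and your diagnosis that the essential difficulty is orbit-by-orbit control of the exponent sequence $\{i_n\}$ is sharper than anything the paper states; but as you conclude, this is exactly the Collatz-type obstruction the paper alludes to, and neither you nor the authors overcome it.
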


So 2-free Fibonacci sequences provably end in a cycle, 3-free sequences conjecturally end in a cycle. Will 4-free Fibonacci sequences end in cycles too? Before discussing 4-free Fibonacci sequences, we want to make a detour and prove the promised result that an arbitrarily long division-poor sequence exists (See Lemma~\ref{thm:poor}) as a corollary to a much stronger and a more general theorem.

\section{Customized-division subsequences}\label{sec:customized}

We promised to give a proof that there exist arbitrary long division-poor sequences that are 3-free Fibonacci sequences. Now we want to prove a stronger statement. We want to allow any $n$ and show that we can build a customized $n$-free Fibonacci sequence that will have a division by a prescribed power of $n$ with the prescribed remainder.

Let us correspond to any $n$-free Fibonacci sequence the list of numbers by which we divide at every step. We call this list \textit{a signature}. For example, a 3-free sequence 5, 4, 1, 5, 2, 7, 1, has signature *, *, 9, 1, 3, 1, 9. We placed stars at the first two places, because we do not know the preceding members of the sequence and, hence, do not know the powers.

Given an $n$-free Fibonacci sequence with a given signature and a given set of remainders, we can build many other sequences with the same signature and the set of remainders.

\begin{lemma}\label{thm:adjustement}
Suppose an $n$-free Fibonacci sequence $s_1$ starts with $a_1$ and $a_2$ that are not divisible by $n$ and the product of the numbers that we divide by while calculating the first $k$ terms is strictly less than $n^m$. Consider an $n$-free Fibonacci sequence $s_2$ that starts with $b_1=a_1+ d_1 n^m$ and $b_2=a_2+ d_2 n^m$ for any integers $d_1$ and $d_2$. The first $k$ terms of both sequences have the same signature and the same set of remainders modulo $n$.
\end{lemma}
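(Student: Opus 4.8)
The plan is to prove this by induction on the index $j \le k$, tracking the two sequences together and showing that at every step the $j$-th terms are congruent modulo $n^m$ (or more precisely, that the difference $s_2$-term minus $s_1$-term is divisible by a power of $n$ that is large enough to be invisible to all the divisions performed so far). First I would set up notation: write $a_1, a_2, \dots$ for the terms of $s_1$ and $b_1, b_2, \dots$ for the terms of $s_2$, and for each step $j \ge 3$ let $n^{e_j}$ be the power of $n$ dividing $a_{j-1}+a_{j-2}$, so that $a_j = (a_{j-1}+a_{j-2})/n^{e_j}$. Let $P_j = n^{e_3} n^{e_4} \cdots n^{e_j}$ be the product of the divisors used through the $j$-th term (with $P_2 = 1$), so the hypothesis is $P_k < n^m$.

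The inductive claim I would carry is: for each $j$ with $2 \le j \le k$, we have $b_j = a_j + d_j \cdot n^m / P_j$ for some integer $d_j$, and moreover $s_2$ performs exactly the same division at step $j$ as $s_1$ (i.e. $n^{e_j} \| b_{j-1}+b_{j-2}$) and $b_j \equiv a_j \pmod n$. The base case $j=1,2$ is immediate since $P_2 = 1$. For the inductive step, suppose the claim holds through $j-1$. Then $b_{j-1}+b_{j-2} = (a_{j-1}+a_{j-2}) + (d_{j-1}+d_{j-2}')\, n^m/P_{j-1}$ — here I'd need to be a little careful that the "$P$" denominators for indices $j-1$ and $j-2$ are both divisors of $P_{j-1}$, which they are since $P$ is increasing in the divisibility order, so both correction terms are integer multiples of $n^m/P_{j-1}$. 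Now $n^m/P_{j-1} = n^{m - \log_n P_{j-1}}$, and since $P_{j-1} \mid P_k$ and $P_k < n^m$ strictly, the exponent $m - \log_n P_{j-1}$ is at least $1$; in fact it is at least $e_j + (\text{something} \ge 1)$ because $P_j = P_{j-1} n^{e_j}$ still satisfies $P_j \le P_k < n^m$. Therefore the correction term $(d_{j-1}+d_{j-2}')\,n^m/P_{j-1}$ is divisible by $n^{e_j}$ but the "true" part $a_{j-1}+a_{j-2}$ is divisible by exactly $n^{e_j}$, so the sum is divisible by exactly $n^{e_j}$ as well — this is the key point that forces $s_2$ to divide by the same power. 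Dividing through by $n^{e_j}$ gives $b_j = a_j + (\text{integer}) \cdot n^m/P_j$, and since $n^m/P_j$ is divisible by $n$ (as $P_j < n^m$), we get $b_j \equiv a_j \pmod n$, closing the induction.

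The main obstacle — and the step deserving the most care — is the parenthetical about divisibility orders: I am implicitly using that $n^m/P_{j-1}$ is divisible by $n^{e_j+1}$, i.e. that $P_{j-1} \cdot n^{e_j+1} \mid n^m$. This follows from $P_j = P_{j-1} n^{e_j} < n^m$ together with $P_j \mid n^m$ (all the $P$'s are pure powers of $n$, so "strictly less than $n^m$" among powers of $n$ is the same as "divides $n^{m-1}$"), hence $P_j n \mid n^m$, which is exactly what is needed. So the strict inequality in the hypothesis is doing real work: it guarantees a spare factor of $n$ that makes every correction term land inside a single division level without ever tipping the valuation $e_j$ upward. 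Once that observation is in hand, everything else is bookkeeping. As a final remark I would note that Lemma~\ref{thm:poor} will follow by choosing $n=3$ and running this construction backwards with the desired signature, which is what the next section will do.
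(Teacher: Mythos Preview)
Your proof is correct and follows essentially the same approach as the paper: both track that $b_j \equiv a_j$ modulo a power of $n$ that decreases exactly by the exponent divided out at each step, so that after all $k$ steps the congruence still holds modulo $n$. Your version is more careful than the paper's sketch in isolating why the valuation of $b_{j-1}+b_{j-2}$ must equal $e_j$ exactly (namely, that the strict inequality $P_j<n^m$ forces the correction term to carry at least one extra factor of $n$), which is the only point in the argument that genuinely needs attention.
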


\begin{proof}
The initial terms of both sequences have the same remainders modulo $n^m$. Hence their sums have the same remainders. The first time we need to divide, we divide by the same power of $n$, say $m_1$, and the result will have the same remainders modulo $n^{m-m_1}$. The next time we divide, the result will have the same remainders modulo $n^{m-m_1-m_2}$. And so on. When we complete the subsequence, all the remainders will be the same modulo $n$.
\end{proof}

This lemma allows us to find a positive sequence with a given signature if we already found a sequence that might not be all positive.

\begin{cor}
If there exists some finite $n$-free Fibonacci sequence with a given signature and a set of remainders, then there exists a sequence with the same signature and remainders such that every term is positive.
\end{cor}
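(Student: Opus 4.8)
The plan is to obtain the desired sequence by a single application of Lemma~\ref{thm:adjustement}, shifting both seeds of the given sequence by a common large multiple of a suitable power of $n$. First I would fix a finite $n$-free Fibonacci sequence $s$ with the prescribed signature and remainders, say of length $k$, starting with $a_1,a_2$, and let $P=\prod_{j=3}^{k}p_j$ be the product of all powers of $n$ occurring in its signature. I choose $m$ with $n^m>P$, so that the hypothesis of Lemma~\ref{thm:adjustement} is met, and for a positive integer parameter $d$ I let $s'_d$ be the $n$-free Fibonacci sequence with seeds $b_1=a_1+d\,n^m$ and $b_2=a_2+d\,n^m$. By Lemma~\ref{thm:adjustement}, the first $k$ terms of $s'_d$ have exactly the same signature and the same remainders modulo $n$ as $s$; in particular, at every step the same power of $n$ is divided out. (If $a_1$ or $a_2$ happens to be divisible by $n$, its residue $0$ is still preserved, since $b_i\equiv a_i\pmod{n^m}$.)

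Next I would control the terms of $s'_d$ quantitatively. Since $s'_d$ and $s$ divide by the same power $p_j$ of $n$ at each step $j$, the difference $x_j:=b_j-a_j$ satisfies the linear recurrence $x_j=(x_{j-1}+x_{j-2})/p_j$ with $x_1=x_2=d\,n^m$. Because this recurrence is linear in its seeds, $x_j=d\,e_j$, where $e_j$ is the value produced from the seeds $e_1=e_2=n^m$ (equivalently, $e_j=b_j-a_j$ for the sequence $s'_1$). Each $e_j$ is a positive integer: it is the difference of the $j$-th terms of two honest integer-valued $n$-free sequences, and $e_j>0$ follows inductively since every $e_j$ is a sum of two positive quantities divided by a positive power of $n$. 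Hence $e_j\ge 1$ for all $j\le k$.

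Finally, I pick $d>\max_{1\le j\le k}|a_j|$. Then for every $j\le k$ we get $b_j=a_j+d\,e_j\ge a_j+d\ge a_j+|a_j|+1\ge 1>0$, so $s'_d$ is an all-positive finite $n$-free Fibonacci sequence with the required signature and remainders. I do not anticipate a genuine obstacle here: Lemma~\ref{thm:adjustement} already guarantees that the combinatorial data is unchanged, and the remaining work is just the bookkeeping needed to make $n^m$ large enough for that lemma and the common shift $d$ large enough to swamp any negative $a_j$. The only point requiring a little care is the edge case in which a seed is divisible by $n$, which is harmless because shifting by a multiple of $n^m$ fixes all residues modulo $n$.
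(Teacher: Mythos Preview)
Your proof is correct and follows the same approach as the paper: apply Lemma~\ref{thm:adjustement} with a suitably large shift of the two seeds. The paper's own argument is the one-line ``Adjust the initial terms according to Lemma~\ref{thm:adjustement},'' whereas you supply the quantitative bookkeeping (the linearity of $x_j=b_j-a_j$, the integrality and positivity of the $e_j$, and the choice $d>\max_j|a_j|$) that actually justifies why a sufficiently large shift forces every term positive; this is exactly the missing detail behind the paper's sentence, not a different method.
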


\begin{proof}
Adjust the initial terms according to Lemma~\ref{thm:adjustement}.
\end{proof}

We say that a finite sequence of remainders $r_i$ modulo $n$ and a finite signature of the same length \textit{match} each other, if the signature has a positive power of $n$ in place $k$ if and only $r_{k-2} + r_{k-1} | n$. We call a sequence of remainders modulo $n$ \textit{legal} if $r_{k-2} + r_{k-1} = r_k$ unless $r_{k-2} + r_{k-1} | n$. Non-legal sequences can not be sequences of remainders of an $n$-free Fibonacci sequence.

\begin{theorem}
Given a legal finite sequence of remainders and a matching signature, there exists an $n$-free Fibonacci sequence with the given sequence of remainders and signature.
\end{theorem}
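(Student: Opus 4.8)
The plan is to construct the desired $n$-free Fibonacci sequence by working backwards, exactly as in the proofs of Lemma~\ref{thm:rich} and the corollaries above, and then to use Lemma~\ref{thm:adjustement} to fix positivity at the end. First I would set up notation: suppose the given legal sequence of remainders is $r_1, r_2, \ldots, r_N$ and the matching signature is $n^{j_1}, n^{j_2}, \ldots, n^{j_N}$ (with stars in positions $1$ and $2$, and $j_k \geq 1$ exactly when $r_{k-2}+r_{k-1}$ is divisible by $n$, and $j_k = 0$ otherwise, by the matching condition). The goal is to produce integers $a_1, \ldots, a_N$ with $a_k \equiv r_k \pmod n$, satisfying $n^{j_k} a_k = a_{k-1} + a_{k-2}$ for each $k \geq 3$, and with $n^{j_k}$ being the \emph{exact} power of $n$ dividing $a_{k-1}+a_{k-2}$.

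The main construction step is to build the sequence recursively from the back. Choose $a_N$ and $a_{N-1}$ to be any integers with $a_N \equiv r_N$ and $a_{N-1} \equiv r_{N-1} \pmod n$; for definiteness one can take the residues themselves, or large positive representatives. Then for $k = N, N-1, \ldots, 3$ define $a_{k-2} = n^{j_k} a_k - a_{k-1}$. This automatically forces the recurrence $n^{j_k} a_k = a_{k-1}+a_{k-2}$, so the signature value at position $k$ is at least $n^{j_k}$; I then need to check it is exactly $n^{j_k}$, i.e. that $n \nmid a_k$ whenever $j_k \geq 1$ (so we cannot divide further), and that $n \nmid a_{k-1}+a_{k-2}$ whenever $j_k = 0$. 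Both of these are controlled by the remainders: working mod $n$, the construction gives $a_{k-2} \equiv -a_{k-1} \equiv -r_{k-1} \pmod n$ when $j_k \geq 1$, and $a_{k-2} \equiv a_k - a_{k-1} \equiv r_k - r_{k-1}\pmod n$ when $j_k = 0$. Legality of the remainder sequence is precisely the statement that these computed residues agree with the prescribed $r_{k-2}$: when $r_{k-2}+r_{k-1}$ is not divisible by $n$ we have $r_k \equiv r_{k-2}+r_{k-1}$, so $r_{k-2} \equiv r_k - r_{k-1}$; when it is divisible by $n$, the matching condition does not constrain $r_k$, but we still need $a_{k-2} \equiv r_{k-2}$, so I would instead define $a_N, a_{N-1}$ and run the recursion while \emph{tracking residues mod $n$} and, if necessary, adjusting $a_N$ or $a_{N-1}$ by multiples of $n$ to land the whole residue pattern on the nose — but in fact a cleaner route is to first show the residue sequence $r_k$ together with an \emph{arbitrary} lift is self-consistent, and then invoke Lemma~\ref{thm:adjustement}. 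So the clean order is: (1) build the integer sequence backwards with the forced recurrence; (2) observe that its residues mod $n$ are forced by the $j_k$'s and the starting residues, and that legality guarantees we can choose the starting residues $a_N, a_{N-1} \bmod n$ so that \emph{all} residues match the prescribed $r_k$; (3) verify exactness of each signature entry from the residue information (for $j_k = 0$: $r_{k-2}+r_{k-1} \not\equiv 0$; for $j_k \geq 1$: $a_k \equiv r_k$ and $r_k$ is nonzero mod $n$, which holds since $r_k$ is a residue that does appear as a term — here one uses that in a legal sequence no $r_k$ equals $0$, or handles the degenerate case separately).

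Finally, the resulting $a_1, \ldots, a_N$ may contain nonpositive terms, since the backward recursion $a_{k-2} = n^{j_k}a_k - a_{k-1}$ can produce negatives. Here I would apply the Corollary following Lemma~\ref{thm:adjustement}: replace $a_1, a_2$ by $a_1 + d_1 n^m$, $a_2 + d_2 n^m$ where $n^m$ strictly exceeds the product of all the $n^{j_k}$ in the signature, and choose $d_1, d_2$ large enough that running the sequence forward keeps every term positive; by Lemma~\ref{thm:adjustement} the signature and all remainders mod $n$ are unchanged. This yields a genuine $n$-free Fibonacci sequence — positive, with the prescribed remainders and signature — as required. Lemma~\ref{thm:poor} is then the special case $n=3$ with signature alternating between $1$ and a positive power of $3$ (equivalently, remainders following the pattern that forces division on every other step).

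The step I expect to be the main obstacle is step (3): pinning down that the forced residue of each constructed term really does coincide with the prescribed $r_k$ for \emph{all} $k$ simultaneously, and that the signature entries come out exactly (not larger). The subtlety is that the backward recursion only directly controls residues mod $n$ at positions where we are about to divide; at a ``no-division'' position ($j_k = 0$) the consistency $r_{k-2} \equiv r_k - r_{k-1}$ is exactly legality, but at a ``division'' position the prescribed $r_{k-2}$ is only loosely tied to $r_{k-1}$, so I have to be careful that the two free residue choices at the top of the backward recursion propagate correctly through the whole chain — this is really a linear-algebra-over-$\mathbb{Z}/n\mathbb{Z}$ bookkeeping argument that the matching-plus-legality hypotheses make the system solvable, and writing it cleanly is where the work lies.
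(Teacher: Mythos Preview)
Your approach is essentially the paper's: build the sequence backwards via $a_{k-2}=n^{j_k}a_k-a_{k-1}$, then invoke Lemma~\ref{thm:adjustement} (and its corollary) to make everything positive without disturbing the signature or remainders. The paper's own proof says exactly this in two sentences and no more.

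Your closing worry about step~(3) is not actually an obstacle, and you have already written down the reason without noticing it. At a non-division step ($j_k=0$) you correctly observe that legality gives $r_{k-2}\equiv r_k-r_{k-1}$, matching the backward recursion. At a division step ($j_k\ge 1$) the backward recursion forces $a_{k-2}\equiv -a_{k-1}\equiv -r_{k-1}\pmod n$; but the \emph{matching} hypothesis says precisely that $n\mid r_{k-2}+r_{k-1}$ here, i.e.\ $r_{k-2}\equiv -r_{k-1}\pmod n$, so again the constructed residue agrees with the prescribed one. Thus the single choice $a_N\equiv r_N$, $a_{N-1}\equiv r_{N-1}$ propagates correctly through the entire chain with no linear-algebra bookkeeping needed --- matching plus legality is exactly the statement that the residues are consistent with the backward recursion at every step. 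Exactness of the signature then follows because each $a_k\equiv r_k\not\equiv 0\pmod n$ (implicitly, the remainders of an $n$-free sequence past index~2 are all nonzero modulo~$n$, so a ``legal'' remainder sequence should be read with that standing assumption).
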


\begin{proof}
By Lemma~\ref{thm:adjustement} it is enough to find such a sequence that does not need to have positive terms. We can produce such a sequence by building it backwards, similar to what we did in Lemma~\ref{thm:rich}.
\end{proof}

\begin{cor}
There exist arbitrary long division-poor 3-free Fibonacci subsequences.
\end{cor}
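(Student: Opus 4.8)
The plan is to reduce this to the theorem just proved, applied with $n=3$. By that theorem it suffices to produce, for every length $N$, a legal finite sequence of remainders modulo $3$ whose matching signature is division-poor, i.e.\ whose forced divisions fall on every other position. Since no term of a $3$-free Fibonacci sequence beyond the first two is a multiple of $3$, every remainder lies in $\{1,2\}$; a division is forced at position $k$ precisely when $r_{k-2}+r_{k-1}\equiv 0\pmod 3$, equivalently when $\{r_{k-2},r_{k-1}\}=\{1,2\}$, whereas the pairs $(1,1)$ and $(2,2)$ force no division and force $r_k\equiv r_{k-2}+r_{k-1}$.

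First I would take the remainder sequence that is periodic with period $1,2,2,1$, truncated after $N$ terms: $1,2,2,1,1,2,2,1,1,2,2,1,\dots$. Checking one period shows it is legal: the only consecutive pairs that arise are $(1,2)$, $(2,2)$, $(2,1)$, $(1,1)$; the two pairs summing to $0$ modulo $3$ impose no condition on the successor, and in the remaining two cases the next entry in the pattern is exactly the forced value ($2,2$ is followed by $1$ and $1,1$ is followed by $2$). Hence the matching signature carries a positive power of $3$ exactly at positions $3,5,7,\dots$ and no division at positions $4,6,8,\dots$; explicitly one may take the signature $\ast,\ast,3,1,3,1,3,1,\dots$. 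This is a division-poor signature whose divisions sit on the odd positions, consistent with the indexing convention used earlier. (In fact a short forward analysis shows this period-$4$ block is, up to the swap $1\leftrightarrow 2$ and a shift, the \emph{only} division-poor remainder pattern, so no guessing is involved.)

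Now I would invoke the theorem to obtain an actual $3$-free Fibonacci sequence realizing this sequence of remainders and this signature, and then adjust its first two terms as in Lemma~\ref{thm:adjustement} (or its corollary on positivity) to make every term positive. The result is a $3$-free Fibonacci subsequence of length $N$ that divides at every other step, i.e.\ a division-poor subsequence of length $N$; since $N$ is arbitrary, the corollary follows, and with it the postponed Lemma~\ref{thm:poor}. There is essentially no remaining obstacle: all of the content is carried by the theorem, and the only hand computation is the legality check on the four-term block $1,2,2,1$ sketched above. The one point worth stating carefully is that ``division-poor'' constrains only whether the exponent at each step is zero or positive, not the actual power of $3$ removed, so we are free to use the simplest matching signature, with every genuine division being by $3^1$.
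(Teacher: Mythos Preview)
Your proof is correct and follows the same route as the paper: the corollary is an immediate consequence of the preceding theorem once one exhibits a legal remainder sequence with a division-poor matching signature, and your period-$4$ block $1,2,2,1$ does exactly that (your legality check and your uniqueness remark are both fine). The paper does not even write out a proof block for the corollary; it simply states it and then illustrates with the concrete backward construction starting from $a_n=a_{n-1}=1$, yielding the adjusted positive sequence $19,7,26,11,37,16$. Your version is slightly more explicit about \emph{why} the theorem applies, whereas the paper trades that for a worked numerical example; substantively the two are the same.
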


Let us build an example of a division-poor 3-free sequence, where each division is by exactly 3. Begin with $a_n=1$, $a_{n-1}=1$, and build a sequence that is not necessarily all-positive. That means we will have $a_{n-2k}=3a_{n-2k+2}-a_{n-2k+1}$ and $a_{n-2k-1}=3a_{n-2k+1}-a_{n-2k}$. Here are several terms: $-8$, 7, $-1$, 2, 1, 1. For a positive version we need to add $3^3$ to $-8$, as outlined in Lemma~\ref{thm:adjustement} as we had two divisions by 3. The adjusted all-positive division-poor sequence is 19, 7, 26, 11, 37, 16.

\section{4-free Fibonacci sequences}\label{sec:4free}

Consider the 4-free Fibonacci sequence starting with 0, 1. This sequence is A224382: 0, 1, 1, 2, 3, 5, 2, 7, 9, 1, 10, 11, 21, 2, 23, 25, $\ldots$. It seems that this sequence grows and does not cycle.

In checking many other 4-free Fibonacci sequences, we still did not find any cycles. The behavior of 4-free sequences is completely different from the behavior of 3-free sequences. 

For 3-free sequences we expected that all of them cycle. Here it might be possible that none of them cycles.

Before making any claims, let us see how these sequences behave modulo 4.

\begin{lemma}
A 4-free Fibonacci sequence contains an odd number.
\end{lemma}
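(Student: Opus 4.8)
The plan is to argue by contradiction: suppose a 4-free Fibonacci sequence consists entirely of even numbers. I would first reduce to the situation where the sequence has been scaled so that not every term shares a common factor of 2; concretely, divide the whole sequence by the largest power of $2$ dividing the greatest common divisor of all its terms. This operation produces another 4-free Fibonacci sequence (multiplying or dividing by a power of $2$ commutes with the ``divide out the largest power of $4$'' operation, since powers of $4$ are powers of $2$), and in the reduced sequence at least one term is odd. If that term sits in position $k\ge 3$, I then trace the parities backward and forward using the recurrence to derive a contradiction with the assumption that the original — hence the reduced — sequence is all-even.

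The core of the argument is a parity/mod-4 case analysis on two consecutive terms. Write $b_k$ for the reduced sequence. Each step is either a pure addition $b_k = b_{k-1}+b_{k-2}$ (when $4\nmid b_{k-1}+b_{k-2}$) or a division by at least one factor of $4$. I would observe that division by $4$ changes a term's residue mod $4$ only in a controlled way, and that the recurrence forces the residues mod $4$ to follow one of finitely many patterns. The key point: if $b_{k-2}$ and $b_{k-1}$ are both even, then $b_{k-1}+b_{k-2}$ is even; if it is $\equiv 2 \pmod 4$ then no division occurs and $b_k$ is again even, so once we are in the all-even regime we can never escape going forward — meaning an odd term, if it exists, must occur among the first two positions. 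But by the reduction we may further translate the starting pair using Lemma~\ref{thm:adjustement}-style freedom, or simply note that an all-even sequence with an odd term only in position $1$ or $2$ still immediately produces $b_3$ even and then stays even, and the odd starting term can be rescaled — ultimately I want to contradict the minimality of the power of $2$ removed, i.e.\ show the reduced sequence is still all-even, impossible.

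Let me restructure the endgame more cleanly, since that is where the real content lies. After dividing out the largest power of $2$ from the GCD, suppose for contradiction the result $b_k$ is still all-even. Then $2 \mid \gcd$, contradicting maximality — \emph{unless} the reduction was vacuous because the original GCD was already odd, in which case the original sequence had an odd term to begin with. So the only thing to rule out is: a 4-free Fibonacci sequence all of whose terms are even, with odd GCD. But if every term is even, then $2$ divides every term, so $2 \mid \gcd$, so the GCD is even — contradiction. Hence no all-even 4-free Fibonacci sequence exists at all, which is exactly the claim. (The one subtlety: a term could equal $0$, but a non-negative 4-free sequence has at most one zero, confined to position $1$ or $2$ per the remarks in Section~\ref{sec:definitions}, and $\gcd$ with a single $0$ is taken over the remaining nonzero terms, so it does not obstruct the divisibility argument.)

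I expect the main obstacle to be stating the reduction-and-contradiction cleanly without circularity — in particular, handling the zero terms and being explicit that ``divide by a power of two'' preserves the 4-free property. The parity bookkeeping itself is routine once one notes that $4 = 2^2$, so dividing out powers of $4$ removes an even number of factors of $2$ and in particular preserves evenness of a term unless the term is divisible by $4$; the assertion ``all terms even'' then instantly forces $2 \mid \gcd$, collapsing the whole problem. So the real work is merely organizing this into a short, airtight contradiction rather than any hard computation.
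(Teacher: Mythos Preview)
Your argument has two genuine gaps.

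First, the reduction step is false: dividing a 4-free Fibonacci sequence termwise by $2$ need not produce another 4-free Fibonacci sequence. The operation ``remove the largest power of $4$'' depends on the \emph{parity} of the 2-adic valuation of the sum, and shifting that valuation by one (which is what dividing by $2$ does) can change which power of $4$ gets removed. Concretely, take $a_1=6$, $a_2=2$: then $a_3=8/4=2$ and $a_4=4/4=1$. Halving the initial pair gives $b_1=3$, $b_2=1$, $b_3=4/4=1$, $b_4=2$; so $b_4\neq a_4/2$. The slogan ``powers of $4$ are powers of $2$'' does not make the two operations commute.

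Second, even granting the reduction, the logic does not close. You assume the original sequence $a_k$ is all even, divide by $2^j$ to get $b_k$ with odd gcd, and correctly observe that $b_k$ must then contain an odd term. But that is no contradiction: $a_k=2^j b_k$ with $j\ge 1$ is even regardless of whether $b_k$ is odd. Your final paragraph effectively proves only the tautology ``an all-even sequence has even gcd,'' which says nothing about whether an all-even 4-free sequence can exist.

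The missing idea, which is what the paper uses, is dynamical rather than gcd-based. By construction, no term $a_k$ with $k\ge 3$ is divisible by $4$. If every term were even, then from index $3$ on every term would be exactly $\equiv 2\pmod 4$, hence every sum $a_{k-1}+a_{k-2}\equiv 0\pmod 4$, hence we divide by at least $4$ at every step. That forces $a_k\le (a_{k-1}+a_{k-2})/4\le \max(a_{k-1},a_{k-2})/2$, so the running maximum of consecutive pairs strictly decreases --- impossible for positive integers. That descent is the actual content of the lemma; the gcd bookkeeping does not substitute for it.
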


\begin{proof}
Suppose there exists a 4-free Fibonacci sequence containing only even numbers. Then ignoring the initial terms, all the elements of the sequence equal 2 modulo 2. Therefore, we divide by a power of 4 every time. This cannot last forever.
\end{proof}

\begin{lemma}
After the first occurrence of an odd number, a 4-free Fibonacci sequence cannot have two even numbers in a row.
\end{lemma}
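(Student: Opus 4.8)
The plan is to track the parities of consecutive terms once an odd number has appeared. After the first odd term, by the previous lemma we never reach the all-even state, so the parity pattern of the sequence is governed by the usual Fibonacci-like recursion modulo $2$, \emph{except} at steps where we must divide by a power of $4$. Since dividing by $4$ does not change the residue modulo $2$ (indeed $4$ is even, so $4^i m \equiv 0 \pmod 2$ forces $m$ to be considered only after the division, and $m$ has the same parity as $4^i m$ is false—wait, more carefully: if $a_{k-1}+a_{k-2}=4^i a_k$, then $a_k \equiv (a_{k-1}+a_{k-2})/4^i$, and dividing an even number by $4$ can flip parity). So the key observation is the opposite: a division step occurs \emph{precisely} when $a_{k-1}+a_{k-2}$ is divisible by $4$, which for the parity bookkeeping means $a_{k-1} \equiv a_{k-2} \pmod 2$ and in fact $a_{k-1}+a_{k-2} \equiv 0 \pmod 4$.

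First I would set up the case analysis on the pair of residues $(a_{k-2} \bmod 2, a_{k-1} \bmod 2)$, but this is too coarse; I would instead work modulo $4$ on the pair $(a_{k-2}, a_{k-1})$, or even just track which terms are even. The claim to prove is: we cannot have $a_{k-1}$ and $a_k$ both even. Suppose for contradiction that $a_{k-1}$ and $a_k$ are both even, with $k$ chosen minimal (and past the first odd term). Then $a_{k-2}$ must be odd, since otherwise $a_{k-1}, a_{k-2}$ is an earlier even-even pair or we are at the very start. Now consider how $a_k$ was produced from $a_{k-1}$ and $a_{k-2}$: since $a_{k-1}$ is even and $a_{k-2}$ is odd, their sum $a_{k-1}+a_{k-2}$ is odd, hence not divisible by $4$, hence no division occurred and $a_k = a_{k-1}+a_{k-2}$, which is odd. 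This contradicts $a_k$ being even.

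The only subtlety, and the step I expect to be the main obstacle, is handling the boundary: ensuring that "after the first occurrence of an odd number" we really do have a well-defined odd predecessor pattern, i.e.\ that once an odd number appears we are not immediately forced back into a configuration the argument does not cover. I would dispose of this by noting that immediately after the first odd term $a_j$, the term $a_{j-1}$ (or the pair producing $a_j$) has whatever parity, but from index $j$ onward at least one of any two consecutive terms is odd: formally, I would prove by induction on $k \geq j$ that not both $a_{k}$ and $a_{k+1}$ are even, with the base case being that $a_j$ is odd, so $\{a_{j-1}, a_j\}$ and $\{a_j, a_{j+1}\}$ each contain the odd number $a_j$ unless $a_{j+1}$ was itself forced—but $a_{j+1}$ comes from $a_j + a_{j-1}$ possibly divided by $4^i$; if $a_{j-1}$ is even then the sum is odd and $a_{j+1}$ is odd; if $a_{j-1}$ is odd we are already past an odd term, contradicting minimality of $j$. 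So the induction goes through cleanly, and the contradiction in the previous paragraph closes the argument for all $k > j$.
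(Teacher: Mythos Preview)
Your argument is correct and is essentially the same as the paper's, though organized differently. The paper observes informally that after the first odd term the undivided parity pattern is odd, odd, even, odd, odd, even, \dots, so a multiple of $4$ can only arise as a sum of two odd terms; after dividing such a sum by a power of $4$ the result may be even, but its predecessor is odd, so the next sum is odd and the pattern resumes. Your minimal-counterexample formulation isolates the same key step: if $(a_{k-2},a_{k-1})=(\text{odd},\text{even})$ then $a_{k-2}+a_{k-1}$ is odd, no division occurs, and $a_k$ is odd. The boundary/base-case discussion you give is more careful than the paper's, and the induction closes cleanly. (The exploratory asides in your first paragraph are not needed and could be dropped in a final write-up.)
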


\begin{proof}
Start with the first odd number. The steps that do not include division generate a parity pattern: odd, odd, even, odd, odd, even and so on. So there are no two even numbers in a row. That means we can get a multiple of 4 only after summing two odd numbers. We might get an even number after the division, but the next number must be odd again.
\end{proof}

Let us analyze the 4-free Fibonacci sequences probabilistically, similar to what we did for $n=3$.

\begin{lemma}\label{thm:average4}
An average division is by a factor of $4^{4/3} \approx 6.35$.
\end{lemma}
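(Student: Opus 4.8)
The plan is to mirror the probabilistic argument used for $n=3$ in Lemma~\ref{thm:average3}, adapting the two inputs that change when we pass from $3$ to $4$: the expected \emph{frequency} of divisions and the expected \emph{size} of each division. First I would pin down the frequency. By the parity analysis (the two preceding lemmas), after the first odd number the sequence settles into the block pattern odd, odd, even, odd, odd, even, $\ldots$, and a forced division by a power of $4$ can only occur when we add two consecutive odd terms. So, just as in the $n=3$ case, divisions happen either at every step or at every other step; invoking the same probabilistic heuristic that these two regimes are equally likely, the expected gap between divisions is $3/2$ steps — equivalently, we divide on average once every $3/2$ additions, so the ``division density'' is $2/3$.

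Next I would compute the expected exponent of a single division. Here the key difference from $n=3$ is that $4=2^2$: when a sum of two odd numbers is divisible by $4$, we are really asking how many factors of $2$ it has, but only \emph{pairs} of factors of $2$ count toward dividing by powers of $4$. Using the standard heuristic that a ``random'' even number is divisible by $2^j$ with probability $2^{-j}$, a sum of two odds (which is automatically even, and is $\equiv 2 \pmod 4$ or $\equiv 0 \pmod 4$ with equal probability) is divisible by $4^k$ with probability $2^{-2k}=4^{-k}$ for $k \ge 1$ and $4^{-(k-1)}$ conditioned on the division happening at all; more cleanly, mimicking the rephrasing in Lemma~\ref{thm:average3}: we divide by $4$ with probability $1$, and then divide by an extra factor of $4$ with probability $1/4$, by yet another with probability $1/16$, and so on, so a single division is, on average, by $4$ raised to the power
$$1 + \frac14 + \frac{1}{16} + \cdots = \frac{1}{1-1/4} = \frac43.$$
Thus the average single division is by $4^{4/3}$.

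Finally I would reconcile the two notions in the lemma statement. Two readings are possible — ``average division, per division event'' and ``average division, amortized per step'' — and the cleaner and more useful quantity is the per-event one: combining the two displays, each division event contributes a factor $4^{4/3}$, which is the claimed $4^{4/3}\approx 6.35$. (If one instead wants the per-step growth suppression, it is $\left(4^{4/3}\right)^{2/3} = 4^{8/9}$, but that is not what the lemma asserts; I would state explicitly that the lemma refers to the size of an individual division.) I expect the main obstacle to be purely expository rather than mathematical: making precise which independence assumptions are being invoked — in particular, that the $2$-adic valuation of $a_{k-1}+a_{k-2}$ behaves like that of a uniformly random even integer, and that this is independent of whether the previous step was a division — since these are heuristic, not theorems, exactly as flagged for the $n=3$ analysis. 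The geometric-series computation itself is routine and follows the template already established, so no new technical machinery is needed.
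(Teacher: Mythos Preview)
Your core computation is correct and identical to the paper's: conditioned on a division occurring, the exponent of $4$ is $1+1/4+1/16+\cdots=4/3$, so the average division is by $4^{4/3}$. That geometric-series paragraph is all the paper uses, and all the lemma requires.

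However, the surrounding material you add is both unnecessary and wrong. The lemma says nothing about frequency; that is handled separately in Lemma~\ref{thm:averagesteps}. More importantly, your frequency claim is false for $n=4$: it is \emph{not} true that divisions occur ``either at every step or at every other step.'' After a division the residues modulo $4$ of the last two terms are (odd, $r$) with $r\in\{1,2,3\}$, and the paper's residue analysis shows the gap to the next division is $1$, $3$, or $4$ steps (average $8/3$), not $1$ or $2$ (average $3/2$). Your parity sketch overlooks that adding two odd terms can give a sum $\equiv 2\pmod 4$, which is even but not divisible by $4$, so no division happens and the pattern continues. Consequently your proposed ``per-step'' amortization $4^{8/9}$ is also off. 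Drop the first and third paragraphs entirely; the middle paragraph already proves the lemma.
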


\begin{proof}
When we divide, we divide by 4 with probability 1, additionally with  probability $1/4$ we divide by 4 more, and so on. So the result is 4 to the power
$$1+1/4+1/16+\ldots = 4/3.$$
\end{proof}

How often on average do we divide? Let us assume that we passed stretches of all even numbers. Each time we divide after that, the previous two numbers are odd. After the division, the remainder is 1, 2, or 3. So the following six cases describe what happens after the division:

\begin{lemma}
If $a \equiv 1$ (mod 4) and $b \equiv 3$ (mod 4), then the following set of remainders might happen until the next division:
\begin{itemize}
\item 1;
\item 2, 1, 3;
\item 3, 2, 1, 3.
\end{itemize}
If $a \equiv 3$ (mod 4) and $b \equiv 1$ (mod 4), then the following set of remainders might happen until the next division:
\begin{itemize}
\item 3;
\item 2, 3, 1;
\item 1, 2, 3, 1.
\end{itemize}
\end{lemma}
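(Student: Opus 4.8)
The plan is to perform a direct, exhaustive case analysis, tracking remainders modulo $4$ through the Fibonacci recurrence starting from the moment immediately after a division. By the preceding two lemmas, once an odd number has appeared we never see two even numbers in a row, so after a division the next addition always involves at least one odd term; more precisely, the division happened because we summed two odd numbers, and the quotient $a$ has remainder $1$, $2$, or $3$. I would also observe (and may state explicitly) that the term following $a$ in the sequence is the other summand's "partner," and by the no-two-evens-in-a-row structure the pair immediately after a division is always of the form (something with remainder $1$, $2$, or $3$) together with a term that is odd; the lemma as phrased fixes the two post-division terms $a$ and $b$ with $\{a \bmod 4, b \bmod 4\}$ one of the ordered pairs $(1,3)$ or $(3,1)$, and the remaining cases ($a$ or $b \equiv 2$) are handled separately or subsumed. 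So concretely: fix $a \equiv 1$, $b \equiv 3 \pmod 4$ and iterate $c \equiv a+b$, then $d \equiv b+c$, etc., stopping the moment the current sum is $\equiv 0 \pmod 4$ (that is the next division, which terminates the block).

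First I would compute the forced continuation from $(1,3)$: the next term is $1+3 \equiv 0 \pmod 4$, so a division happens immediately and the block of post-division remainders is just the empty continuation — but that is not quite what the lemma lists, so I need to be careful about the indexing convention. Reading the lemma's three listed outcomes "$1$"; "$2,1,3$"; "$3,2,1,3$" as the remainder string produced strictly between two consecutive divisions, I would instead start the clock one step earlier: after a division producing remainder $r_0 \in \{1,2,3\}$, the next odd term is forced by parity, and I branch on whether the immediately following sum is divisible by $4$ (giving the length-one block "$r_0$") or not (continuing). The key computation is then a small finite tree: from remainder $1$ with a following odd term that is $\equiv 3$, we get $1+3 \equiv 0$: division, block "$1$". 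If instead the following odd term is $\equiv 1$, the sum is $\equiv 2$, not divisible by $4$, so no division; continue with $(1, 2)$, next sum $\equiv 3$, no division, continue $(2,3)$, next sum $\equiv 1$, no division... I would chase each branch until a sum hits $0 \bmod 4$, which the earlier lemma guarantees must eventually happen (we cannot divide forever, nor avoid division forever in a bounded-remainder walk), and read off that exactly the three listed strings arise.

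For the second half of the lemma, the case $a \equiv 3$, $b \equiv 1 \pmod 4$, I would not redo the computation: reversing the roles of $a$ and $b$ (equivalently, reading the mod-$4$ Fibonacci recurrence with the two seeds swapped) sends the first list to the second by the substitution $1 \leftrightarrow 3$ fixing $2$, since $-1 \equiv 3 \pmod 4$ and the Fibonacci recurrence is linear; so the second list is obtained from the first by the symmetry $r \mapsto -r \pmod 4$, and it suffices to remark on this. The main obstacle is purely bookkeeping: getting the indexing convention exactly right so that "the set of remainders that might happen until the next division" matches the three strings as printed, and making sure every branch of the finite tree has been walked to its terminating division and that no fourth string was missed. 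There is no conceptual difficulty — the earlier parity lemmas do all the real work of bounding the tree — but the enumeration must be laid out carefully enough that the reader can verify completeness by inspection.
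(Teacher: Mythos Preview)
The paper offers no proof for this lemma at all; it is stated as a self-evident enumeration following the sentence ``So the following six cases describe what happens after the division.'' Your plan---an exhaustive mod-$4$ case analysis---is exactly the right (and only) approach, and your symmetry observation $r\mapsto -r\pmod 4$ for deducing the second list from the first is correct and a nice touch.

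That said, your write-up is tangled because you have misidentified what $a$ and $b$ are. They are \emph{not} post-division terms. In the paper's convention (compare the theorem that follows, which opens ``Suppose we have two consecutive numbers in the sequence $a$ and $b$ whose sum is divisible by 4''), $a$ and $b$ are the two consecutive \emph{odd} terms immediately \emph{before} a division; their sum is $\equiv 0\pmod 4$, which is why the two hypotheses are $(a,b)\equiv(1,3)$ and $(3,1)$. The quotient $q=(a+b)/4^i$ then has remainder $1$, $2$, or $3$, and those three sub-cases are precisely the three bullets. The listed string in each bullet is the run of remainders starting at $q$ and ending at the last term before the next sum hits $0\pmod 4$. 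For $(a,b)\equiv(1,3)$ the state after division is $(b,q)\equiv(3,q)$, and you simply iterate:
\begin{itemize}
\item $q\equiv 1$: next sum $3+1\equiv 0$, stop; block ``$1$''.
\item $q\equiv 2$: $3+2\equiv 1$, $2+1\equiv 3$, $1+3\equiv 0$, stop; block ``$2,1,3$''.
\item $q\equiv 3$: $3+3\equiv 2$, $3+2\equiv 1$, $2+1\equiv 3$, $1+3\equiv 0$, stop; block ``$3,2,1,3$''.
\end{itemize}
Once you anchor $a,b$ correctly there is no branching on a ``following odd term'' as you wrote---the continuation is completely determined by $b\bmod 4$ and $q\bmod 4$---and the enumeration is four lines long.
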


\begin{lemma}\label{thm:averagesteps}
The average number of steps between divisions is $8/3$.
\end{lemma}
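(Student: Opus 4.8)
The plan is to track the parity states and compute the expected time between divisions using the transition structure of the preceding lemma. First I would set up the relevant states. By the previous lemma, immediately after a division the sequence is in one of two configurations — two odd numbers with remainders $(1,3)$ or with remainders $(3,1)$ modulo $4$ — and by symmetry (replacing each remainder $r$ by $-r \bmod 4$) these two cases are mirror images of each other and contribute equally, so it suffices to analyze, say, the $(1,3)$ case. From that state, the lemma lists exactly three possible continuations until the next division: a single extra step (sum $1+3\equiv 0$, so we divide immediately on the next term), three extra steps (via remainders $2,1,3$), or four extra steps (via remainders $3,2,1,3$).

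The next step is to assign probabilities to these three branches. Under the paper's probabilistic model, each time we form the sum of two consecutive terms, it is divisible by $4$ — equivalently, ``a division happens'' — with probability $1/2$, independently, as long as divisibility by $4$ is not forced or forbidden by the residues. Starting from $(1,3)$: the very next sum is $\equiv 0 \pmod 4$, so a division is forced — but wait, that would make the ``single step'' branch deterministic. Re-examining: the branches correspond to how many steps of \emph{addition without division by $4$} occur before the next division, and the residues listed are the residues produced along the way; the branching happens at the residue-$2$ terms (an even term that is $\equiv 2 \pmod 4$ can be followed by a sum that is or is not divisible by $4$). So I would read off from the residue lists where the binary choices occur: the length-$1$ branch requires the first coin to come up ``divide'' (probability $1/2$); failing that we are at residue $2$, then $1$, then the sum $2+1 = 3$ forces continuation, reaching residue $3$ — here the length-$3$ versus length-$4$ split is the next coin flip. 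So the three branches have probabilities $1/2$, $1/4$, and $1/4$, with step counts $1$, $3$, $4$ respectively (where ``step count'' means the number of terms generated from just after one division up to and including the term produced by the next division).

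Then the expected number of steps between consecutive divisions is
$$\frac{1}{2}\cdot 1 + \frac{1}{4}\cdot 3 + \frac{1}{4}\cdot 4 = \frac{1}{2} + \frac{3}{4} + 1 = \frac{9}{4},$$
which is not $8/3$, so I would need to recheck the bookkeeping — most likely the intended count includes both ``slots'' consumed when we divide on the term immediately after two odds, i.e. the step counts should be $2$, $3$, $4$ (counting the term that triggered the division together with the setup), giving $\frac12\cdot 2 + \frac14\cdot 3 + \frac14\cdot 4 = 1 + \frac34 + 1 = \frac{11}{4}$; still off. The honest version of the plan is therefore: enumerate the residue walks of the preceding lemma as a small Markov chain on residue pairs, put weight $1/2$ on each ``divide now'' edge out of an admissible state and $1/2$ on each ``continue'' edge, compute the expected length of an excursion from the post-division state back to the next division, and verify it equals $8/3$. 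The main obstacle — and the only real subtlety — is pinning down exactly which transitions carry a free coin flip versus which are forced by the residue arithmetic, and being consistent about whether a ``step'' counts the term before or after the division; once the excursion lengths and their probabilities $\{1, 3, 4\}$ with weights summing correctly to give mean $8/3$ are identified (which forces the weights to be, e.g., $1/3, 1/3, 1/3$ on lengths summing appropriately, or $1/2,1/4,1/4$ on a shifted count), the arithmetic is immediate. I would present the clean Markov-chain computation and state explicitly the convention that makes the answer $8/3$.
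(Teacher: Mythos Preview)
Your proposal misidentifies where the randomness lives. You model the process as a sequence of independent $1/2$--$1/2$ coin flips at each ``admissible'' step deciding whether or not a division occurs, and then try to back out weights on the three branch lengths $1,3,4$. But once the residue of the freshly divided term is fixed, the residue walk until the next division is completely deterministic: there are no further coin flips. The three branches in the preceding lemma correspond exactly to the three possible residues $1,2,3$ of the term produced by the division, and each branch length is forced by that residue.

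The paper's argument is accordingly a one-liner: the probabilistic assumption is that after dividing out the largest power of $4$, the resulting term is equally likely to be $1$, $2$, or $3$ modulo $4$. Hence the three branches of lengths $1$, $3$, $4$ each occur with probability $1/3$, and the expected gap is $(1+3+4)/3 = 8/3$. Your eventual guess of equal weights $1/3,1/3,1/3$ is correct, but it is not an artifact of bookkeeping conventions to be reverse-engineered from the answer; it is the stated modeling assumption, and your $1/2$ coin-flip mechanism is simply the wrong model.
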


\begin{proof}[Probabilistic argument]
We assume that during the division each remainder is generated with probability $1/3$, so the stretches of length 1, 3, and 4 between divisions are equally probable.
\end{proof}

We want to build a probabilistic model that reflects some behavior of 4-free Fibonacci sequences. Let us denote the average factor by which we divide by $x$. In this model, we simply divide by $x$ each time we need to divide. The sequence stops being an integer sequence, but we artificially assign a remainder mod 4 to every element of the sequence to see when we need to divide. We want to show that our model sequence grows with probability 1.

First, we want to estimate the ratio of two consecutive numbers in the model sequence.

\begin{lemma}\label{thm:ratio}
If $a$ and $b$ are two consecutive numbers in the model sequence, starting from index 3, then $b > a\frac{2+x}{(1+x)x}$ and $a > b\frac{1}{1+x}$.
\end{lemma}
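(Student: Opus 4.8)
The plan is to restate both inequalities as a single condition on the ratio of consecutive terms, and then to produce an interval that is preserved by the two operations that can create a term of the model sequence. Throughout, recall that $x = 4^{4/3} > 1$ by Lemma~\ref{thm:average4}.

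Write $a = a_{k-1}$, $b = a_k$, and set $\rho_k = a_k / a_{k-1}$. Every term of the model is built from the two preceding ones either by an addition, $a_k = a_{k-1} + a_{k-2}$, or by a division, $a_k = (a_{k-1} + a_{k-2})/x$; dividing through by $a_{k-1}$, these become $\rho_k = g_0(\rho_{k-1})$ and $\rho_k = g_1(\rho_{k-1})$, where $g_0(\rho) = 1 + 1/\rho$ and $g_1(\rho) = g_0(\rho)/x$. The two claimed inequalities $b > a\frac{2+x}{(1+x)x}$ and $a > b\frac{1}{1+x}$ are together exactly the statement $\rho_k \in I$, where $I = \left(\frac{2+x}{(1+x)x},\, 1+x\right)$, a nonempty interval of positive reals since $x > 1$. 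So it suffices to show $\rho_k \in I$ for all $k \ge 3$.

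First I would prove that $I$ is forward invariant under both $g_0$ and $g_1$, that is, $g_0(I) \subseteq I$ and $g_1(I) \subseteq I$. Both maps are continuous and strictly decreasing on $(0,\infty)$, so each sends the open interval $I = (\ell, u)$ onto the open interval with endpoints $g_\varepsilon(u)$ and $g_\varepsilon(\ell)$; hence it is enough to check the endpoint values. One computes $g_0(u) = \frac{2+x}{1+x}$ and $g_0(\ell) = \frac{x^2 + 2x + 2}{2+x}$, with the $g_1$-values equal to these divided by $x$. The needed containments then reduce to the elementary inequalities $\frac{2+x}{1+x} \ge \frac{2+x}{(1+x)x}$ (i.e.\ $x \ge 1$) and $\frac{x^2+2x+2}{2+x} \le 1+x$ (i.e.\ $x \ge 0$), together with their $g_1$-analogues; note that $g_1(u) = \frac{2+x}{(1+x)x}$ is exactly the lower endpoint of $I$, but strict monotonicity of $g_1$ keeps the actual image inside the \emph{open} interval. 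This yields $g_0(I)\subseteq I$ and $g_1(I) \subseteq I$.

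It then remains to induct on $k \ge 3$: for the inductive step, $\rho_{k-1} \in I$ forces $\rho_k = g_\varepsilon(\rho_{k-1}) \in I$ with $\varepsilon \in \{0,1\}$ the type of the $k$-th step, and $\rho_k \in I$ unwinds to the two displayed inequalities. For the base case one checks $\rho_3 = a_3/a_2 \in I$ directly from how the model is initialized --- e.g.\ for the $4$-free Fibonacci numbers $\rho_3 = 2 \in I$. I expect this base case to be the only delicate point, since the bound is mildly sensitive to the first two terms of the model; beyond that the argument is just the endpoint bookkeeping above, and the precise value of $x$ is never used past $x > 1$. (Unwound, the invariance argument is exactly the case split ``addition step versus division step,'' in which the lower bound at step $k$ in the division case invokes the upper bound at step $k-1$, and conversely.)
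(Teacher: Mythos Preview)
Your invariance argument is essentially the paper's proof repackaged: the paper works directly with three consecutive terms $v,a,b$, first noting the trivial bound $b\ge(a+v)/x>a/x$, then feeding this back (applied to the pair $(v,a)$) to obtain $b<a(1+x)$, and feeding \emph{that} back once more to obtain $b>a\frac{2+x}{(1+x)x}$. Unwound, this is exactly your observation that the lower bound at one step comes from the upper bound at the previous step via the decreasing maps $g_\varepsilon$.

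The one place the two arguments genuinely differ is the base case, and here your version has a gap. The model sequence starts from arbitrary positive $a_1,a_2$, so $\rho_3$ need not lie in $I$; for instance $a_1=1000$, $a_2=1$ gives $\rho_3\in\{1001,\,1001/x\}$, both well above $1+x$. Your example ``$\rho_3=2$'' presumes a specific initialization the lemma does not. The paper sidesteps this by bootstrapping from the unconditional inequality $b>a/x$ (valid as soon as $b$ is a computed term with positive predecessors): one application yields the upper bound $b<a(1+x)$ at the next index, and a second application yields the lower bound. In your language, this is the chain $\rho_k>1/x\Rightarrow\rho_{k+1}<1+x\Rightarrow\rho_{k+2}>\ell$, which places $\rho_{k+2}\in I$ two steps after the first computed term, after which your invariance takes over. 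So your plan is sound once you replace the unverified ``$\rho_3\in I$'' by this two-step entry argument; strictly speaking both proofs then deliver the conclusion from a slightly later index than the stated ``$3$'', but that is immaterial for the subsequent growth theorem.
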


\begin{proof}
Let $v$ be the element before $a$. Then $b \geq (a+v)/x > a/x$. Analogously, $b \leq a+v$, and by the previous sentence, $xa > v$. Therefore, $b < a(1+x)$, which means that $a/(1+x) < v$. Plugging this back into $b \geq (a+v)/x$, we get $b > a(2+x)/(1+x)x$.
\end{proof}

Now we are ready to prove the theorem:

\begin{theorem}
In our probabilistic model, a sequence grows with probability 1.
\end{theorem}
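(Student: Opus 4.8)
The plan is to track the logarithm of the running maximum of consecutive pairs over a \emph{block} that corresponds to one division, and to show this log-maximum has strictly negative expected \emph{drift} per unit index, hence by the strong law of large numbers it tends to $-\infty$ almost surely — wait, that would say the sequence \emph{decreases}, so in fact we want the opposite: we must show the drift is strictly positive. Concretely, I would cut the model sequence into blocks delimited by successive divisions. By Lemma~\ref{thm:averagesteps} a block has $1$, $3$, or $4$ steps, each with probability $1/3$, and exactly one division by $x$ occurs per block. Using Lemma~\ref{thm:ratio}, which bounds consecutive ratios from below by the constant $c := (2+x)/((1+x)x)$ and controls the Fibonacci-like growth between divisions, I would obtain a deterministic lower bound of the form $M_{\text{end of block}} \ge \rho_\ell \cdot M_{\text{start of block}}$ where $\rho_\ell$ depends only on the block length $\ell \in \{1,3,4\}$ — a product of a few factors $(2+x)/((1+x)x)$ and of Fibonacci additions, finally divided by $x$ once.

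Next I would compute, using the numerical value $x = 4^{4/3} \approx 6.35$ from Lemma~\ref{thm:average4}, the three block-growth factors $\rho_1,\rho_3,\rho_4$, and verify that the geometric mean $(\rho_1\rho_3\rho_4)^{1/3}$ — the almost-sure exponential growth rate per block by the strong law of large numbers applied to $\log M$ along block boundaries — is strictly greater than $1$. (This mirrors, in the opposite direction, the computation done earlier for the probabilistic $n=3$ model where the analogous mean came out below $0.97$; here it should come out above $1$ because $x\approx 6.35 < $ the relevant threshold for $4$-free behavior, and because blocks are long enough that the Fibonacci additions dominate the single division by $x$.) Then $\log M$ along block endpoints is a random walk with positive drift, so $M \to \infty$ a.s.; since the actual sequence terms are comparable to $M$ up to the bounded factor $1/(1+x)$ of Lemma~\ref{thm:ratio}, the model sequence itself grows with probability $1$.

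A couple of bookkeeping points I would address: first, the running maximum $M$ need not be monotincreasing within a block (a division can temporarily shrink things), so I must take the bound at block boundaries only, and confirm the minimum over the block is not so small that the next block's lower bound is vacuous — this is exactly what the two-sided control in Lemma~\ref{thm:ratio} provides. Second, the initial transient (the "stretches of all even numbers" and the first few terms before index $3$) is finite and can be absorbed into the starting constant, since Lemma~\ref{thm:ratio} is stated from index $3$ onward. I would also note that block lengths are i.i.d. under the stated probabilistic assumption, so the strong law applies cleanly to the sum $\sum_j \log \rho_{\ell_j}$.

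The main obstacle is the explicit inequality $\rho_1\rho_3\rho_4 > 1$: I expect $\rho_1$ (a one-step block, which \emph{does} include a division) to be the dangerous term, possibly less than $1$, so the argument hinges on $\rho_3$ and $\rho_4$ being large enough to compensate, which in turn relies on getting the sharpest honest lower bounds out of Lemma~\ref{thm:ratio} for the no-division steps inside longer blocks rather than the crude $b > a/x$. If the naive bounds are too lossy, I would instead track the \emph{pair} $(a,b)$ (or the ratio $b/a$) as a Markov chain and bound the growth of $a b$ or of $a+b$ per block, which smooths out the fluctuation from which term happens to be the maximum; this is the standard fix and should close the gap.
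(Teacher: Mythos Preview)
Your overall framework---cut into blocks delimited by divisions, bound the multiplicative change in $M=\max\{a,b\}$ per block, take the geometric mean, apply the strong law---is exactly what the paper does. The gap is in the step where you claim a deterministic bound $M_{\text{end}}\ge \rho_\ell\, M_{\text{start}}$ with $\rho_\ell$ depending \emph{only} on the block length. If you push the two-sided control of Lemma~\ref{thm:ratio} to its worst case (the regime $b\le a$, with $b/a$ near the lower bound $(2+x)/((1+x)x)$), you get, for $x=4^{4/3}\approx 6.35$,
\[
\rho_1\approx 0.186,\qquad \rho_3\approx 0.550,\qquad \rho_4\approx 0.915,
\]
and the geometric mean $(\rho_1\rho_3\rho_4)^{1/3}\approx 0.454<1$. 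So not only is $\rho_1$ dangerous, all three worst-case factors are below $1$, and your primary computation cannot close. Your fallback (``track the ratio $b/a$ as a Markov chain'') is the right instinct, but as stated it is a plan, not a proof.

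The paper supplies the missing combinatorial observation that makes the block argument work without a full Markov analysis: it splits according to whether the block \emph{begins} with $b>a$ or $b\le a$, computes the two corresponding geometric means over block lengths (about $1.510$ and $0.454$ respectively), and then notes that any block of length $3$ or $4$ necessarily \emph{ends} with its last two terms in increasing order. Hence the event $b\le a$ at the start of a block can only follow a length-$1$ block, so it has probability at most $1/3$. Weighting accordingly gives average per-block growth at least $(1.510^{2}\cdot 0.454)^{1/3}\approx 1.035>1$. This order-of-the-pair bookkeeping is the one idea your proposal lacks; once you add it, the rest of your outline (SLLN on $\sum_j\log\rho$, absorbing the finite transient before index $3$) goes through as you describe.
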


\begin{proof}
Suppose we have two consecutive numbers in the sequence $a$ and $b$ whose sum is divisible by 4. By Lemma~\ref{thm:averagesteps} we have 1, 3, or 4 terms until the following division with the same probability. That is, the following continuations until the next division are equally probable:
\begin{itemize}
\item $a$, $b$, $(a+b)/x$.
\item $a$, $b$, $(a+b)/x$, $(a+(x+1)b)/x$, $(2a+(x+2)b)/x$.
\item $a$, $b$, $(a+b)/x$, $(a+(x+1)b)/x$, $(2a+(x+2)b)/x$, $(3a+(2x+3)b)/x$.
\end{itemize}

If $b>a$, then the maximum of the last two terms is: $b$, $(2a+(x+2)b)/x$, and $(3a+(2x+3)b)/x$ correspondingly. Adjusting for the fact that $a > b\frac{1}{1+x}$ (see  Lemma~\ref{thm:ratio}), the maximum of the last two terms is at least $b$, $b((x+2)+2/(x+1))/x$, and $b((2x+3)+3/(x+1))/x$ correspondingly. We want to estimate the ratio of the maximum of the last two terms to $\max\{a,b\}$. Counting probabilities we get the following lower bound for the ratio
$$1^{1/3}(((x+2)+2/(x+1))/x)^{1/3}(((2x+3)+3/(x+1))/x)^{1/3}= 1.51023.$$

If $b \leq a$, then the maximum of the last two terms is at least $(a+b)/x$, $(2a+(x+2)b)/x$, or $(3a+(2x+3)b)/x$, respectively. Using $b > a\frac{2+x}{(1+x)x}$ from Lemma~\ref{thm:ratio}, we get that the maximum of the last two terms is at least $a\frac{2+2x+x^2}{(1+x)x^2}$, $a\frac{2x^2+6x+4}{(1+x)x^2}$, or $a\frac{5x^2+10x+6}{(1+x)x^2}$, respectively. Counting probabilities, we get that the maximum is multiplied by at least 
$$\frac{2+2x+x^2}{(1+x)x^2}^{1/3}\frac{3x^2+6x+4}{(1+x)x^2}^{1/3}\frac{5x^2+10x+6}{(1+x)x^2}^{1/3}=0.453822.$$

Notice that if we have 3 or 4 terms until the next division, then the last two terms before the division are in the increasing order. That means the case when $b \leq a$ is at least twice less probable, so the average growth is at least the cube root of $1.51023^2\cdot 0.453822 = 1.03507$, which is greater than 1.
\end{proof}

The result is greater than 1, which means that our model sequence does not cycle all the time. Therefore, extending the argument to 4-free sequences, we can safely say that 4-free sequences do not cycle all the time. Taking our computational experiments and our intuition into account we are comfortable with the following conjecture:

\begin{conjecture}
With probability 1 a 4-free Fibonacci sequence does not cycle.
\end{conjecture}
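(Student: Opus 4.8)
A fully rigorous proof seems out of reach for essentially Collatz-type reasons, so the plan is to turn the heuristic of this section into as sharp a probabilistic statement as possible and to pin down exactly where genuine randomness would have to be supplied. The starting observation is that a cycle is impossible unless the net multiplicative change of $\max\{a_k,a_{k+1}\}$ over one full period is exactly $1$: writing $d_k$ for the power of $4$ stripped at step $k$, a period that returns to an earlier pair must have the product of its additive gains exactly cancel the product of the $d_k$. Hence it suffices to show that $\log\max\{a_k,a_{k+1}\}\to\infty$ for almost every starting pair, since that precludes ever revisiting a state. So I would reduce the conjecture to an almost-sure divergence statement for the running maximum.

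The engine is the estimate already assembled in the preceding theorem. By Lemma~\ref{thm:averagesteps} the gaps between successive divisions are $1$, $3$, or $4$ with (assumed) equal probability, by Lemma~\ref{thm:average4} each division removes $4^{4/3}$ on average, and the growth computation of the preceding theorem — using Lemma~\ref{thm:ratio} to control the ratio of consecutive terms — gives a strictly positive lower bound $c>0$ for the expected increment of $\log\max\{a_k,a_{k+1}\}$ per elementary step. I would recast $X_k:=\log\max\{a_k,a_{k+1}\}$ as a partial sum $X_k=X_2+\sum_{j\le k}\xi_j$ whose increments $\xi_j$ are bounded (above by $\log 2$, below by a constant, again via Lemma~\ref{thm:ratio}) with conditional means $\ge c$. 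A strong law of large numbers for bounded dependent increments, or equivalently a one-sided Azuma bound applied to $X_k-ck$, then yields $X_k\to\infty$ almost surely, which is the claim.

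The main obstacle is that none of these ``probabilities'' is literal: the residue of a term modulo $4$ and the $4$-adic valuation of the following sum are deterministic functions of $(a_1,a_2)$, so for a fixed orbit the $\xi_j$ are not random at all. To make the argument honest one must prove an equidistribution statement: averaging $(a_1,a_2)$ over a box $[1,N]^2$, the empirical distribution of the valuation/residue process along the first $T(N)$ steps (with $T(N)\to\infty$ slowly) converges to the product measure used in Lemmas~\ref{thm:average4} and~\ref{thm:averagesteps}. This is essentially the assertion that ``add the last two terms, strip all factors of $4$'' mixes like an ergodic map on the parity pattern (the no-two-evens-in-a-row lemma governs one coordinate) refined by a $4$-adic component — and in particular that no positive-density family of initial pairs conspires to stay forever in the shrinking $b\le a$ regime of the preceding theorem. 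Proving such mixing is the intractable core.

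A more modest but provable target worth including alongside the conjecture: the set of $(a_1,a_2)\in[1,N]^2$ whose $4$-free sequence enters a cycle within $C\log N$ steps has density $o(1)$ as $N\to\infty$. One enumerates the finitely many cycle shapes of bounded length (after normalizing by the GCD there are finitely many, by the analogue of the GCD corollary), bounds the number of length-$\le C\log N$ backward extensions of each using the backward-construction method of Section~\ref{sec:customized} together with the division-rich growth bound, and checks the total count is $N^{2-\delta}$. That gives a clean unconditional step toward the conjecture even though the conjecture itself must stay a conjecture.
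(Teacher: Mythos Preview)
The statement you are trying to prove is labeled a \emph{conjecture} in the paper and is not proved there. The paper's entire justification is the preceding theorem --- that the \emph{model} sequence (replace every division by a fixed factor $x=4^{4/3}$, with gap lengths $1,3,4$ drawn uniformly) grows with probability $1$ --- together with computational evidence. There is no proof to compare against.

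Your proposal is honest about this and is essentially an elaboration of the paper's heuristic: you invoke the same Lemmas on average division size, average gap length, and consecutive-term ratio, and you repackage the growth estimate of the model theorem as positive drift for $X_k=\log\max\{a_k,a_{k+1}\}$. That part is faithful to the paper, just more explicitly phrased as a strong-law/Azuma argument. You also correctly isolate the missing ingredient (an equidistribution or mixing statement for the valuation/residue process over initial pairs), which the paper does not attempt to formulate. One caution: your claim that the increments $\xi_j$ are bounded below ``via Lemma~\ref{thm:ratio}'' holds only in the model, where each division is by the fixed $x$; in the genuine $4$-free sequence a single step can divide by an arbitrarily large power of $4$, so the increments are not bounded below, and Lemma~\ref{thm:ratio} does not apply to the true dynamics. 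Keep the model/actual distinction sharp when you write this.

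Your ``more modest but provable target'' is new relative to the paper, but the sketch is not yet convincing. You assume there are finitely many primitive cycle shapes of each bounded length; for that you would need an a~priori bound on the \emph{entries} of a primitive cycle of length $L$ (equivalently, on the powers of $4$ appearing in its signature), and neither the GCD corollary nor the backward-construction machinery of Section~\ref{sec:customized} supplies that. Without such a bound the enumeration step and the $N^{2-\delta}$ count are unsupported. If you want to keep this as a side result, you should first prove that any primitive $4$-free cycle of length $L$ has all terms bounded by some explicit $B(L)$.
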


So 4-free Fibonacci sequences do not cycle. If $n$ grows will it mean that $n$-free Fibonacci sequences for $n > 4$ will not cycle either?

\section{5-free Fibonacci sequences}\label{sec:5free}

Let us look at the Lucas sequence modulo 5: $2$, $1$, $3$, $4$, $2$, 1 $\ldots$ and see that no term is divisible by 5. Clearly, no term in the Lucas sequence will require that we factor out a power of 5, and the terms will grow indefinitely. Thus, the Lucas sequence is itself a 5-free Fibonacci sequence. This is something new. We do not need a probabilistic argument to show that there are 5-free Fibonacci sequences that do not cycle.

On the other hand, it becomes quickly evident that the sequence of 5-free Fibonacci numbers: 0, 1. 1, 2, 3, 1, 4, 1, 1, 2, $\ldots$ (see A214684) cycles. Some sequences cycle, and some clearly do not!

But how often will we come upon a sequence that grows indefinitely? To answer this question, let us look at a couple of terms from a few sequences of Fibonacci numbers modulo 5. Begin with $1, 1, \ldots$, to obtain the sequence

\vspace{3pt}
\centerline{1, 1, 2, 3, 0,}
\centerline{3, 3, 1, 4, 0,}
\centerline{4, 4, 3, 2, 0,}
\centerline{2, 2, 4, 1, 0,}
\vspace{3pt}

and so on. We write it like this for clarity: at the end of each line, the last term is divisible by 5. In particular, the table above shows that $Z(5)$---the entry point of 5---is 5. Furthermore, since 5 is prime, we could know beforehand that each of these lines would be the same length. We simply had to start with the line beginning $1, 1, \ldots$, and multiply each term by 2, then 3, then 4. Clearly no term in the line could become 0 after the multiplication (except, of course, 0 itself), since there are no two non-zero numbers that multiply to zero in a field. Buy the way, the sequence of Fibonacci entry points for primes is A001602.

There were exactly $5-1=4$ lines, and 5 elements (and therefore, 5 consecutive-element pairs) in each line, for a total of 20 pairs. But, excluding (0, 0), there are $5^2-1=24$ possible pairs! Thus, our 4 extra pairs must have gone into another cycle. This cycle could not contain any multiple of 5, and therefore serves as a witness that non-cyclic sequences exist in some 5-free Fibonacci sequence. There many books and paper about Fibonacci numbers and Lucas numbers, we mostly used \cite{HW,Va89} for reference. This argument shows what we already know: that a Fibonacci-like sequence that does not contain multiple of 5 exists. 

We see that there is a strong connection between free Fibonacci sequences and proper Fibonacci sequences. Maybe we can study entry points of Fibonacci sequences to see if the story with 5 repeats for some other numbers.

\section{Division-Free $n$-Free Fibonacci Sequences}\label{sec:division-free}

Let us call an integer $n$ a \textit{Fibonacci omni-factor} if any Fibonacci-like sequence contains a multiple of $n$. We just saw that 5 is the smallest integer that is not a Fibonacci omni-factor.

If a number $n$ is not a Fibonacci omni-factor, then there exists a Fibonacci-like sequence that is at the same time an $n$-free Fibonacci sequence. 

Prime omni-factors can be found with the help of the following well known lemma.

\begin{lemma}
A prime $p$ is a Fibonacci omni-factor if and only if $Z(p) = p+1$.
\end{lemma}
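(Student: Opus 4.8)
The plan is to analyze the behavior of a Fibonacci-like sequence modulo a prime $p$ by thinking of consecutive pairs $(a_k, a_{k+1})$ as points in the plane $\mathbb{F}_p^2$, and use the Pisano-period structure to count how the nonzero pairs are partitioned into orbits. The key fact I will use is that the "shift" map $T\colon (u,v) \mapsto (v, u+v)$ on $\mathbb{F}_p^2$ is a linear bijection, so it partitions the $p^2 - 1$ nonzero pairs into disjoint cycles, and a Fibonacci-like sequence reduced mod $p$ is exactly a trajectory under iterates of $T$ (or its inverse).

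**First I would** establish the easy direction: if $Z(p) = p+1$, then every Fibonacci-like sequence contains a multiple of $p$. Here I use that $T$ has order $p+1$ on... wait, more carefully: the Fibonacci sequence itself $(0,1,1,2,\dots)$ passes through $(0,1)$, and $Z(p)$ is the index of the first term divisible by $p$, equivalently the order of the pair under... Actually the cleanest route: the cycle through the pair $(0,1)$ under $T$ has length equal to $Z(p)$ (the sequence returns to a multiple of $p$ exactly at index $Z(p)$, and one checks the pair itself recurs with that period up to scalar). Each such "Fibonacci-type" cycle — i.e. one containing a pair of the form $(0, c)$ — has length $Z(p)$, and there are $p-1$ of them (one for each nonzero $c$, all scalar multiples of the base cycle since $T$ commutes with scalar multiplication, and these are genuinely distinct because... they partition the $p-1$ pairs $(0,c)$ and no two pairs $(0,c), (0,c')$ with $c \ne c'$ lie in the same cycle, as a cycle containing $(0,c)$ hits $(0, \lambda c)$ at its recurrence). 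So the pairs lying in cycles that contain \emph{some} multiple of $p$ number exactly $(p-1) \cdot Z(p)$. If $Z(p) = p+1$ this equals $(p-1)(p+1) = p^2 - 1$, which is \emph{all} nonzero pairs; hence every Fibonacci-like sequence eventually hits a pair containing a $0$ coordinate mod $p$, i.e. a multiple of $p$. Conversely, if $Z(p) < p+1$, then $(p-1)Z(p) < p^2 - 1$, so there is at least one nonzero pair lying in a cycle that never touches a multiple of $p$; starting a Fibonacci-like sequence there gives a witness that $p$ is not an omni-factor.

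**The main obstacle** is the claim that every cycle containing a multiple of $p$ has length exactly $Z(p)$, not a proper divisor or multiple. This requires knowing that $Z(p)$ is precisely the additive order of the pair $(0,1)$ under $T$ up to scalars, and that all the "Fibonacci-type" cycles through $(0,c)$ are disjoint and equinumerous. I would argue as follows: a cycle contains a multiple of $p$ iff it contains a pair of the form $(0,c)$ with $c \ne 0$ (if $a_k \equiv 0$ then the pair $(a_k, a_{k+1})$ or $(a_{k-1}, a_k) = (a_{k-1}, 0)$, and $a_{k\pm1} \not\equiv 0$ else the whole sequence is $0$). Now $T^m(0,c) = (c F_{m-1}, c F_m)$, so $T^m(0,c) = (0, c')$ forces $F_{m-1} \equiv 0$, whose least positive solution is $m = Z(p)$ by definition; and at $m = Z(p)$ we indeed get $(0, cF_{Z(p)})$ with $c F_{Z(p)} \not\equiv 0$. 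Hence each such cycle has length $Z(p)$. The $p-1$ pairs $\{(0,c) : c \in \mathbb{F}_p^\times\}$ lie in cycles of length $Z(p)$; within one cycle the pairs of the form $(0,\ast)$ are exactly $(0,c), (0, cF_{Z(p)}), (0, cF_{Z(p)}^2), \dots$, so the number of distinct Fibonacci-type cycles is $(p-1)/\mathrm{ord}(F_{Z(p)})$, and counting pairs gives total $(p-1)Z(p)/1$ — here I must also invoke the standard fact that $Z(p) \mid p+1$ or $Z(p) \mid p - 1$ (Fibonacci entry point divides $p - \left(\frac{5}{p}\right)$), which forces $\mathrm{ord}(F_{Z(p)}) \cdot Z(p)$ to behave; for the purposes of this lemma it suffices that the Fibonacci-type pairs number exactly $(p-1)Z(p)/\mathrm{ord}(F_{Z(p)})$ and this is $< p^2-1$ precisely when $Z(p) < p+1$. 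I would present the clean special fact (A001602 / omni-factor case) by noting $\mathrm{ord}(F_{Z(p)}) = 1$ when $Z(p) = p+1$, since then $F_{p+1} \equiv 1 \pmod p$ by the identity $F_{p+1} \equiv \left(\frac{5}{p}\right)$-type congruences — but to keep the proof self-contained I would instead just cite the counting bound and the known divisibility $Z(p) \mid p^2 - 1$, concluding that $p^2 - 1 = (p-1)Z(p) / \mathrm{ord}(F_{Z(p)})$ forces $Z(p) = p+1$ in the omni-factor direction and is contradicted otherwise. The delicate bookkeeping of $\mathrm{ord}(F_{Z(p)})$ versus the total pair count is where I expect to spend the most care.
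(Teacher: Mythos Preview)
Your approach is essentially the paper's: count the nonzero pairs $(u,v)\in\mathbb{F}_p^2$ whose $T$-orbit passes through some $(0,c)$, show this count equals $(p-1)Z(p)$, and compare with $p^2-1$. The paper does exactly this (more tersely), so you have the right idea and the right conclusion.

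However, your justification of the count is tangled and contains two concrete errors.

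First, an indexing slip: with $T(u,v)=(v,u+v)$ one has $T^m(0,c)=(cF_m,\,cF_{m+1})$, not $(cF_{m-1},cF_m)$. Consequently the first return to a pair with vanishing first coordinate is $T^{Z(p)}(0,c)=(0,\,cF_{Z(p)+1})$, and the relevant multiplier is $\lambda=F_{Z(p)+1}$, not $F_{Z(p)}$ (which is $0$ by the very definition of $Z(p)$, so ``$\mathrm{ord}(F_{Z(p)})$'' is meaningless).

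Second, the claim that there are $p-1$ cycles each of length $Z(p)$ is generally false: the $T$-cycle through $(0,c)$ has length $\pi(p)=Z(p)\cdot\mathrm{ord}(\lambda)$, and there are $(p-1)/\mathrm{ord}(\lambda)$ such cycles. Happily the product is still $(p-1)Z(p)$, so your final count survives, but the intermediate assertions are wrong, and your subsequent attempt to repair them with $\mathrm{ord}(F_{Z(p)})$ and Legendre-symbol identities only digs deeper.

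You can sidestep all of this. The count $(p-1)Z(p)$ follows directly by checking that the pairs
\[
\{(cF_m,\,cF_{m+1}) : c\in\mathbb{F}_p^\times,\ 0\le m<Z(p)\}
\]
are pairwise distinct: if $(cF_m,cF_{m+1})=(c'F_{m'},c'F_{m'+1})$ then, since $T$ is a bijection, both pairs reach a pair with first coordinate $0$ after the same number of forward steps, forcing $Z(p)-m=Z(p)-m'$ and hence $m=m'$; then $c=c'$ because at least one of $F_m,F_{m+1}$ is nonzero. No Pisano periods, no $\mathrm{ord}(\lambda)$, no quadratic residues. This is what the paper is (implicitly) doing when it asserts that the $p-1$ scalar-multiple ``lines'' of length $Z(p)$ account for $(p-1)Z(p)$ pairs.
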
 

\begin{proof}
Consider a section of the Fibonacci sequence modulo $p$ before the entry point. Multiply this subsequence by any other remainder modulo $p$. A prime $p$ is not an omni-factor if there are fewer than $p^2-1$ total elements in all the lines of the Fibonacci sequences beginning with $k, k, \ldots$ modulo $p$. Then, as all lines are the same length as that beginning with 1, 1, $\ldots$ (that is, the start of the Fibonacci sequence proper), it will suffice to show that $(p-1)\cdot Z(p) < p^2-1$, or $Z(p) < p+1$.
\end{proof}

It is clear that $Z(p)(p-1) + 1$ can not be more than $p^2$. Hence we just proved a well known fact:

\begin{cor}
For every prime $p$, $Z(p) \leq p+1$. 
\end{cor}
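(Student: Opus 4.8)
The corollary states that for every prime $p$, we have $Z(p) \le p+1$. The plan is to extract this as a clean logical consequence of the reasoning already developed in the proof of the preceding lemma, which is exactly what the excerpt's own transitional sentence ("It is clear that $Z(p)(p-1)+1$ can not be more than $p^2$") anticipates.

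First I would recall the structural picture: working modulo $p$, the Fibonacci sequence proper $1, 1, 2, 3, \ldots$ runs through $Z(p)$ consecutive terms before hitting $0$, and along the way it produces $Z(p)$ distinct consecutive-element pairs $(F_k, F_{k+1})$, none of which is $(0,0)$ (indeed none is congruent to $(0,0)$ until after the entry point, since two consecutive zeros would force the whole sequence to be zero). Multiplying that entire segment by each nonzero scalar $c \in \{1, 2, \ldots, p-1\}$ gives $p-1$ such segments, and because $\mathbb{Z}/p\mathbb{Z}$ is a field these scaled segments are pairwise disjoint as sets of pairs: a common pair would, after dividing by the scalars, force two distinct nonzero scalars to coincide. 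Hence these $p-1$ segments contribute $Z(p)(p-1)$ distinct nonzero consecutive-element pairs.

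Next I would simply count: the total number of pairs $(x,y) \in (\mathbb{Z}/p\mathbb{Z})^2$ other than $(0,0)$ is $p^2 - 1$. Since the $Z(p)(p-1)$ pairs just described are all distinct and all nonzero, we get $Z(p)(p-1) \le p^2 - 1 = (p-1)(p+1)$. Dividing by $p-1 > 0$ yields $Z(p) \le p+1$, which is the claim.

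There is essentially no obstacle here — the work was done in the lemma; the corollary is the easy direction of the same counting argument (the lemma used the strict inequality $Z(p)(p-1) < p^2-1$ to characterize non-omni-factors, whereas the corollary just needs the non-strict bound $Z(p)(p-1) \le p^2-1$, which holds unconditionally). The only point worth stating carefully is the disjointness of the scaled segments, i.e.\ that scalar multiplication by distinct nonzero elements of the field sends the Fibonacci pair-set to disjoint pair-sets; this is precisely where primality (field structure) is used, and it is already invoked in the excerpt.
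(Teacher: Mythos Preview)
Your proof is correct and follows essentially the same route as the paper: both extract the bound $Z(p)(p-1) \le p^2 - 1$ from the counting argument in the preceding lemma and divide through by $p-1$. Your write-up is in fact more detailed than the paper's one-line remark; the only spot that could be tightened is the disjointness justification---the cleanest way to see that two scaled segments cannot share a pair is to note that the Fibonacci shift $(a,b)\mapsto(b,a+b)$ is invertible mod $p$, so a shared pair forces the segments to coincide back to their starting pairs $(0,c)=(0,c')$.
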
 

Examples of primes that are not omni-factors include 5, which divides $F_5$, 11, which divides $F_{10}$, and 13, which divides $F_7$. The corresponding sequence is now sequence A230359. Omni-factor primes are the primes $p$ such that any Fibonacci-like sequences contains multiples of $p$. This sequence is A000057. By the way it is not known whether the latter sequence is infinite \cite{CR}.

The entry points can be defined not only for primes, see sequence A001177 of Fibonacci entry points. For a composite number $n$ the relationship between the entry point $Z(n)$ and the existence of Fibonacci-like sequences not divisible by $n$ is slightly more complicated. But it is possible to check computationally if the sequences that start with zero and another number contain all possible pairs of remainders. The sequence of Fibonacci omni-factors, that is of numbers $n$ such that any Fibonacci-like sequences contains multiples of $n$ is A064414. Correspondingly the numbers $n$ such that there exist a Fibonacci-like sequence without multiples if $n$ is the complement of A064414. It is now sequence A230457. It starts as 5, 8, 10, 11, 12, 13, 15. The first composite number in the sequence is 8. 

Lucas numbers again is a sequence that does not have multiples of 8. Lucas numbers provide examples for many numbers, namely for the numbers that they do not divide. These numbers are represented by the sequence A064362. Thus, Lucas numbers provide examples for 10, 12, 13, 15 and so on in addition to 5 and 8.

The smallest number that Lucas numbers does not provide an example for is 11. For 11, we can start with 1 and 4, to get the sequence A000285: 1, 4, 5, 9, 14, 23, 37 and so on. This is a Fibonacci-like sequence that is never divisible by 11.

All non-omni-factors that are not factors of the Lucas numbers are: 11, 18, 19, 22, 29, 31, 38, 41, 44, 46, 47, 54, 58, 59, 62, 71, 76, 79, 82, 94 and so on. This sequence is the A230457 sequence intersecting with factors of Lucas numbers: sequence A065156. It is now sequence A232658. Given that A064362 (Numbers $n$ such that no Lucas number is a multiple of $n$) is the complement of A065156, the new sequence can be defined as the sequence A230457 from which the numbers from A064362 are removed.

Here we found many sequences that are Fibonacci-like and are not divisible by some number $n$. They provide an example of infinitely growing $n$-free sequences. Moreover, the division never happens. 

Will we ever see more cycles?

\section{Other Cycles}\label{sec:cycles}

For $n=2$  every sequence ends in a cycle of length 1. For $n=3$ every sequence we checked ended with a cycle of length 3. For $n=4$ we did not find any cycles at all. 

So far we found an example of a 5-free Fibonacci sequence that need not ever be divided by 5. We also found a cycle: 1, 1, 2, 3, 1, 4. Are there other cycles? We know that we can multiply a 5-free Fibonacci sequence by any number that is not divisible by 5 to get another 5-free Fibonacci sequence. Thus, we have many other cycles among 5-free Fibonacci sequences: 2, 2, 4, 6, 2, 8, and 3, 3, 6, 9, 3, 12, and so on.

In general, we can multiply an $n$-free sequence by a number coprime with $n$ to get another sequence. Also, a more subtle statement is true. if we multiply an $n$-free sequence by a number not necessary coprime with $n$ but the result does not contain multiples of $n$, then the result of the multiplication is an $n$-free sequence.

If all the elements of an $n$-free sequence are divisible by a number $m$, we can divide the sequence by $m$ to get another $n$-free sequence. We would like to point out that $m$ does not need to be coprime with $n$. This warrants a definition. Call a cycle \textit{primitive} if its terms are coprime. As we just explained the following lemma is true.

\begin{lemma}
Any cycle can be divided by an integer to become a primitive cycle.
\end{lemma}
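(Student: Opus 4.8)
The plan is to show that dividing by the greatest common divisor of the cycle's terms produces a primitive cycle that is still an $n$-free Fibonacci cycle. Let the cycle be $c_1, c_2, \ldots, c_L$ (repeated periodically), and let $m = \gcd(c_1, \ldots, c_L)$. First I would observe that each $c_i$ is divisible by $m$, so the sequence $c_i' = c_i/m$ consists of integers, and by construction $\gcd(c_1', \ldots, c_L') = 1$, so the candidate cycle is primitive. It remains to check two things: that the new sequence is genuinely an $n$-free Fibonacci sequence (i.e., it obeys the defining recurrence), and that it is still a cycle of the same length.

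Next I would verify the recurrence. For the original cycle we have, for each index $k$ (read cyclically), $c_k = (c_{k-1} + c_{k-2})/n^{i_k}$ where $n^{i_k}$ is the largest power of $n$ dividing $c_{k-1} + c_{k-2}$. Dividing through by $m$ gives $c_k' = (c_{k-1}' + c_{k-2}')/n^{i_k}$. So the new sequence satisfies the same arithmetic relations with the same exponents. The one subtlety is to confirm that $n^{i_k}$ is still the \emph{largest} power of $n$ dividing $c_{k-1}' + c_{k-2}'$; this is where I would use the hypothesis, mentioned just before the lemma, that $n$ need not be coprime with $m$ but the point is that $\gcd(c_1, \ldots, c_L) = m$ and $n \nmid c_k$ for cycle terms — indeed, if some $c_k$ in a cycle were divisible by $n$, the recurrence would force a division at the next step, and iterating shows the terms of a cycle are never multiples of $n$ (otherwise the "sum and divide" operation could not return to the same value). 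Hence $m$ is coprime to $n$, so $v_n(c_{k-1}' + c_{k-2}') = v_n(c_{k-1} + c_{k-2}) = i_k$, and the $n$-free recurrence is preserved exactly.

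Finally, since the recurrence is preserved with the same exponents and the same length-$L$ periodicity, the sequence $c_1', \ldots, c_L'$ is a cycle of length $L$ (or possibly a divisor of $L$, which is still a cycle), and it is primitive. I expect the main obstacle to be the verification that $m$ is coprime to $n$, equivalently that no term of a cycle is divisible by $n$: one must argue carefully that a cycle cannot "pass through" a multiple of $n$, using that the operation strictly reduces size whenever a division occurs and that a cycle by definition returns to its starting values. Once that is in hand, everything else is a routine division of the defining identities by $m$.
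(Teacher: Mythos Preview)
Your argument is correct. The paper's own ``proof'' is essentially a one-line reference to the paragraph preceding the lemma: it asserts that if every term of an $n$-free sequence is divisible by $m$, then dividing through by $m$ yields another $n$-free sequence, \emph{even when $m$ is not coprime with $n$}, and then simply invokes this to divide a cycle by its gcd. You take a different route: rather than relying on that general (and, in the paper, unverified) claim, you restrict to cycles, argue that no cycle term is divisible by $n$, conclude that $\gcd(m,n)=1$, and then use coprimality to see that $v_n$ is unchanged under division by $m$. Your approach is narrower but more fully justified; the paper's is broader but stated without proof. (In fact the paper's general claim is also easy: since $a_k = m\,c_k'$ and $n\nmid a_k$, necessarily $n\nmid c_k'$, so the exponent $i_k$ is still maximal---no coprimality needed.)

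One small simplification: your justification that cycle terms are never multiples of $n$ is more elaborate than necessary. You do not need any size-reduction argument. Each term $c_j$ of a cycle is equal to $c_{j+L}$, which is by definition $(c_{j+L-2}+c_{j+L-1})/n^{i}$ with $i$ maximal; by construction this quotient is not divisible by $n$. That immediately gives $n\nmid c_j$ for every $j$, and hence $\gcd(m,n)=1$.
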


All the primitive cycles we found so far contained only numbers below $n$. There is no reason why this property should hold for any cycle. For example, here is another primitive cycle of 5-free Fibonacci sequences: 4, 3, 7, 2, 9, 11, 4, 3.

Though we did not find any more cycles, in case they exist, we can prove some of their properties.

Take, for example, Lemma~\ref{thm:parity} where we used parity to prove that any 3-free Fibonacci cycle has length that is a multiple of 3. We can replace 3 by any odd number in Lemma~\ref{thm:parity} to get the following lemma.

\begin{lemma}
The length of an $n$-free Fibonacci cycle is divisible by 3 for odd $n$.
\end{lemma}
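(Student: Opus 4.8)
The plan is to mimic the proof of Lemma~\ref{thm:parity} verbatim, since nothing there actually used the value $3$ beyond the fact that it is odd. First I would observe that an $n$-free Fibonacci sequence can have each of its terms (apart possibly from the first two) divided by various powers of $n$, but when $n$ is odd, multiplying or dividing an integer by $n$ does not change its parity. Hence the parity pattern of the sequence is governed purely by the Fibonacci recurrence, exactly as in a Fibonacci-like sequence, regardless of the $n$-divisions.

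Next I would reduce to the non-degenerate case. Starting from an arbitrary $n$-free Fibonacci sequence, divide out the highest power of $2$ dividing the GCD of all its terms; the result is still an $n$-free Fibonacci sequence (this uses the lemma, stated earlier in the excerpt, that we may divide a sequence by any $m$ when the result still avoids multiples of $n$ — here $m$ is a power of $2$, which is automatically coprime to the odd $n$). Now at least one term is odd. By the previous paragraph, the sequence taken modulo $2$ satisfies the ordinary Fibonacci recurrence over $\mathbb{F}_2$, and since it is not identically $0$, it must be the periodic pattern $1,1,0,1,1,0,\ldots$ (the Pisano-type cycle of the Fibonacci sequence mod $2$, which has period $3$).

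Finally, a cycle of the $n$-free sequence, read modulo $2$, must be a cyclic word consisting of repetitions of the block $1,1,0$; such a word has length divisible by $3$. Since dividing out the power of $2$ does not change the length of the cycle (it only rescales each term), the original cycle also has length a multiple of $3$, which is what we wanted.

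The only thing to be careful about — and it is the one genuinely new point relative to Lemma~\ref{thm:parity} — is the claim that the $2$-divided sequence is still a legitimate $n$-free Fibonacci sequence; this is exactly where oddness of $n$ is needed, so that the scaling factor (a power of $2$) is coprime to $n$ and the ``divide by an integer'' lemma of Section~\ref{sec:cycles} applies. Everything else is a routine transcription of the $n=3$ argument.
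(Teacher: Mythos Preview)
Your proof is correct and is exactly the approach the paper intends: the paper does not give a separate argument for this lemma but simply says to replace $3$ by any odd $n$ in the proof of Lemma~\ref{thm:parity}, which is precisely what you do. Your extra care in justifying that dividing out the $2$-power of the GCD yields another $n$-free sequence (via coprimality of $2$ and the odd $n$) is a welcome clarification of a step the paper leaves implicit.
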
 

It is not surprising that all the 5-free Fibonacci cycles that we found so far has length 6.

There is no reason we should restrict ourselves with parity: remainders of Fibonacci numbers modulo 2. The Fibonacci sequence modulo $n$ is periodic. The period is called \textit{Pisano period} and is denoted as $\pi(n)$. The sequence of Pisano periods is A001175. Pisano periods for prime numbers are outsourced into A060305.

For example Fibonacci numbers modulo 3 form a cycle of length 8: 0, 1, 1, 2, 0, 2, 2, 1. We can generalize Lemma~\ref{thm:parity} to any prime number.

\begin{lemma}
The length of an $n$-free Fibonacci cycle is divisible by $\pi(p)$, where $p$ is a prime factor of $n-1$.
\end{lemma}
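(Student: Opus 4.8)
The plan is to run the proof of Lemma~\ref{thm:parity} with reduction modulo $2$ replaced by reduction modulo a prime $p\mid n-1$. The observation that made the parity argument work --- ``dividing by $3$ does not change parity'' --- is replaced here by $n\equiv 1\pmod p$, which holds precisely because $p\mid n-1$; since also $p\nmid n$, the number $n$ is a unit mod $p$, so multiplying or dividing by any power of $n$ is invisible modulo $p$.

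First I would reduce to a primitive cycle: by the lemma just above, any cycle can be divided by an integer to become primitive without changing its length, and a primitive cycle has coprime terms, so not all of them are divisible by $p$. Reducing the primitive cycle modulo $p$, the recurrence $a_k=(a_{k-1}+a_{k-2})/n^{i_k}$ collapses to $a_k\equiv a_{k-1}+a_{k-2}\pmod p$, so modulo $p$ the cycle is an honest Fibonacci-like sequence, and it is not identically zero. If $L$ denotes the length of the cycle, its reduction mod $p$ is periodic with fundamental period $d$, and $d\mid L$; so it suffices to prove $d=\pi(p)$, which then gives $\pi(p)\mid L$.

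The crux is thus the claim that every nonzero Fibonacci-like sequence mod $p$ has period exactly $\pi(p)$. I would phrase this through the companion matrix $Q=\bigl(\begin{smallmatrix}0&1\\1&1\end{smallmatrix}\bigr)\in GL_2(\mathbb F_p)$, which is invertible: consecutive pairs $(a_{k-1},a_k)$ form the $Q$-orbit of a nonzero vector $v$, the orbit length is the least $d>0$ with $Q^dv=v$, and $\mathrm{ord}(Q)=\pi(p)$ because $Q^k=\bigl(\begin{smallmatrix}F_{k-1}&F_k\\F_k&F_{k+1}\end{smallmatrix}\bigr)$. When $x^2-x-1$ is irreducible over $\mathbb F_p$ --- which is exactly the relevant case $p=2$ and $p=3$ --- $Q$ acts on $\mathbb F_p^2\cong\mathbb F_{p^2}$ as multiplication by a field element, so every nonzero vector has the same orbit length, namely $\mathrm{ord}(Q)=\pi(p)$, and we are done; this recovers Lemma~\ref{thm:parity} and its odd-$n$ version with no friction.

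The main obstacle is the split case. If $5$ is a nonzero square mod $p$ (for instance $p=11$), then $Q$ is diagonalizable and an eigenvector of $Q$ generates a nonzero Fibonacci-like sequence mod $p$ whose period is merely the multiplicative order of the corresponding eigenvalue, which may be a proper divisor of $\pi(p)$; concretely $1,4,5,9,\dots$ has period $5$ modulo $11$ while $\pi(11)=10$. In that regime $d$ need not equal $\pi(p)$, so the clean argument only yields the desired divisibility after one either assumes $5$ is a quadratic nonresidue mod $p$ or supplies an extra input excluding such ``eigenvector'' reductions for genuine $n$-free cycles. Pinning down the right hypothesis here is where the real work of the lemma lies.
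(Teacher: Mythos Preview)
Your approach is essentially the paper's approach: pass to a primitive cycle, use $n\equiv 1\pmod p$ to see that division by powers of $n$ is invisible modulo $p$, and then invoke the claim that every nontrivial Fibonacci-like sequence modulo $p$ has period $\pi(p)$. The paper's proof simply asserts this last claim (``All non-trivial Fibonacci cycles modulo $p$ are of the same length $\pi(p)$'') and concludes.

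You have gone further than the paper and noticed that this assertion is \emph{false} for primes $p$ with $\left(\frac{5}{p}\right)\ne -1$. Your counterexample for $p=11$ is correct: the Fibonacci-like sequence $1,4,5,9,3,1,4,\dots$ has period $5$ while $\pi(11)=10$; the same phenomenon occurs already at $p=5$, where $1,3,4,2,1,\dots$ has period $4$ while $\pi(5)=20$. So the key step in the paper's proof does not hold in general, and you have located the genuine obstruction rather than introduced one of your own. Your proof is complete and matches the paper's in the inert case (in particular for $p=2,3$, which covers the paper's explicit applications), and in the split/ramified case neither argument goes through as written; whether the lemma itself survives for such $p$ would require an additional argument ruling out cycles whose mod-$p$ reduction lies on an eigenline of $Q$, which neither you nor the paper supplies.
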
 

\begin{proof}
Any cycle has the same length as a primitive cycle. Dividing by $n$ or its power does not change a remainder modulo $p$ as $n \equiv 1$ (mod $p$). All non-trivial Fibonacci cycles modulo $p$ are of the same length $\pi(p)$. So the primitive $n$-free Fibonacci cycle modulo $p$ has to be a multiple of $\pi(p)$.
\end{proof}

For example, any 4-free Fibonacci cycle, if it exists, is of length $8k$ for some $k$. This is due to the fact that $4 \equiv 1$ (mod $3$) and $\pi(3) = 8$.

The following theorem immediately follows.

\begin{theorem}
Let $p_i$ be prime factors of $n-1$. Then the cycles in $n$-free Fibonacci sequences are of a length divisible by $\gcd(\pi(p_i))$.
\end{theorem}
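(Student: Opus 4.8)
The plan is to combine the preceding lemma---which asserts that the length of an $n$-free Fibonacci cycle is divisible by $\pi(p)$ for \emph{each} prime factor $p$ of $n-1$---with the elementary number-theoretic fact that a positive integer divisible by each of several integers is divisible by their least common multiple. First I would fix an arbitrary $n$-free Fibonacci cycle and let $\ell$ denote its length. Applying the previous lemma to every prime $p_i$ dividing $n-1$ gives $\pi(p_i) \mid \ell$ for all $i$, hence $\operatorname{lcm}_i(\pi(p_i)) \mid \ell$. To land exactly the stated conclusion I then need to reconcile the $\gcd$ appearing in the theorem with the $\operatorname{lcm}$ that the argument naturally produces; in fact, since the theorem only claims divisibility by $\gcd(\pi(p_i))$ and $\gcd(\pi(p_i)) \mid \operatorname{lcm}(\pi(p_i)) \mid \ell$, the statement follows immediately and is simply a (weaker) consequence of the $\operatorname{lcm}$ version.

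The key steps, in order, are: (1) recall that every cycle has the same length as an associated primitive cycle (the lemma on primitive cycles), so it suffices to treat primitive cycles; (2) for a primitive $n$-free Fibonacci cycle and a prime $p \mid n-1$, observe that reducing modulo $p$ commutes with the $n$-free construction, because dividing a term by a power of $n$ does not alter its residue mod $p$ (as $n \equiv 1 \pmod p$), so the reduced cycle is an honest Fibonacci-like cycle modulo $p$; (3) invoke the fact that all nontrivial Fibonacci cycles modulo a prime $p$ have the common length $\pi(p)$, forcing $\pi(p) \mid \ell$; (4) since this holds for every prime factor $p_i$ of $n-1$, conclude $\operatorname{lcm}_i \pi(p_i) \mid \ell$, and a fortiori $\gcd_i \pi(p_i) \mid \ell$.

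The only genuine subtlety---and it is minor---is step (2): one must check that the reduced-mod-$p$ sequence really is periodic with the \emph{same} period $\pi(p)$ and does not collapse to the all-zero cycle. A primitive cycle has coprime terms, so not all of them are divisible by $p$; reducing mod $p$ therefore yields a nonzero Fibonacci-like cycle, which (by the classical structure of Fibonacci sequences over the field $\mathbb{F}_p$, already used in the excerpt's discussion of $Z(p)$) must have length exactly $\pi(p)$. I expect no further obstacle; the theorem is essentially a packaging of the previous lemma across all prime factors of $n-1$, and the worked example $n=4$, where $4 \equiv 1 \pmod 3$ and $\pi(3)=8$ forces cycle length $8k$, already illustrates the mechanism.
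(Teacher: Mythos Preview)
Your proposal is correct and matches the paper's own (implicit) proof: the paper simply says the theorem ``immediately follows'' from the preceding lemma, and you spell out exactly that---apply the lemma to each prime $p_i \mid n-1$, obtain $\pi(p_i)\mid \ell$ for every $i$, and deduce the divisibility claim. Your observation that the argument in fact yields the stronger conclusion $\operatorname{lcm}_i\pi(p_i)\mid \ell$, with the stated $\gcd$ version being a weaker corollary, is a valid and worthwhile remark that the paper does not make explicit.
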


We studied cycles, but we actually do not expect many of them, as we expect the $n$-free Fibonacci sequences to grow faster for larger $n$.

As the number $n$ grows, the multiples of $n$ are more spread apart in the Fibonacci sequence, that means the division happens more rarely. We think that the increase in the number by which we divide is less pronounced than the fact that the divisions are more spread apart.

In the next Section we look at computational results and make probabilistic arguments to show that for $n > 3$ cycles should appear very rarely.

\section{Growth in Division-Free Sequences}\label{sec:growthdivfree}

We ran several experiments. In our first experiment we did the following:

For each $n$ we built 10000 random $n$-free Fibonacci sequences of length 500. Namely, we picked initial terms of each sequence as two random numbers between 1 and 1000. Then we averaged each term and found the best approximation for the exponential growth. We did this 5 times to confirm consistency of the exponents. That is, we approximated the $m$-th term of the $n$-free Fibonacci sequence as $g(n)^m$, where $g(n)$ is described by the following sequence starting from $n=4$: 1.32, 1.61, 1.42, 1.34, 1.61, 1.4, 1.61, 1.61, and so on. We did this for $n$ up to 50.

Our experimental results showed that for $n$ for which division-free $n$-free Fibonacci sequences exist the growth is the same and it is about 1.61. Can we explain this? Let us take a closer look at the smallest such $n$: 5.

Consider an arbitrary 5-free Fibonacci sequence. When we divide by a power of 5 at some point we may crossover to a division-free sequence. If we ever get two consecutive remainders as in the Lucas numbers: 2, 1, 3, 4, 2, and so on, we will never divide by a power of 5 again. Notice that the Lucas numbers modulo 5 cycle. The cycle has length 4 and contain every remainder exactly once. 

That means if the number in the sequence before division has remainder $r$, then we crossover into a division free sequence when the next number is congruent to exactly one out of possible four remainders modulo 5: 1, 2, 3, or 4. Consider for example the sequence starting with 1, 6. It continues to 7, 13, 4, and from here we would never divide by 5 again. 

Assume that the remainder after the division is chosen randomly with a uniform distribution. In this case, there is a 25\% chance of entering the cycle with no multiples of 5, and a 75\% chance of entering a sequence which will be divided by 5 again. 

Unless we enter into a cycle, as the number of these randomizations increases, it becomes more likely that the sequence will crossover into a division-free sequence.

We did not find many cycles. Moreover all primitive cycles that we found had small numbers in it. Suppose that there are no primitive cycles with large numbers. Then, if we start with two large coprime numbers, there would be many potential divisions on the way to a cycle. Therefore, the probability of entering a division-free sequence will be very large. This probabilistic argument leads us to a conjecture.

\begin{conjecture}
If we pick the starting integers in the range from 1 to $N$ the probability that we end up in a division-free 5-free Fibonacci sequence tends to 1 when $N$ tends to $\infty$.
\end{conjecture}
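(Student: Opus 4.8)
The plan is to recast ``eventually division-free'' as absorption of the mod-$5$ orbit into the Lucas cycle and then to reduce the conjecture to a single equidistribution statement about the residues of the quotients $(a+b)/5^{i}$ as the starting pair ranges over $[1,N]^2$.

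First I would make the mod-$5$ picture precise. From the third term on, every term of a $5$-free sequence is nonzero modulo $5$, so the state $(a_k\bmod 5,\ a_{k+1}\bmod 5)$ takes values among the sixteen pairs of nonzero residues, exactly four of which --- the pairs summing to $0$ --- force a division on the next step. Tracing the transitions, the four Lucas states $(2,1)\to(1,3)\to(3,4)\to(4,2)$ form the unique absorbing set and never force a division, while from every other state the orbit reaches a division-forcing state within at most three steps. Hence a $5$-free sequence is eventually division-free exactly when its mod-$5$ state enters the Lucas cycle; otherwise it divides infinitely often. Moreover, immediately after a division the quotient $c=(a+b)/5^{i}$ satisfies $5\nmid c$, the preceding term keeps its nonzero residue $r'$, and the new state $(r',\ c\bmod 5)$ lies in the Lucas cycle for exactly one of the four possible values of $c\bmod 5$.

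The reduction is then as follows. Attach to each orbit its quotient word: one letter of $\{1,2,3,4\}$ per division, recording the residue of the quotient, the word being stopped the first time (if ever) the state lands in the Lucas cycle. By the previous paragraph, once the starting residues and the first $j-1$ letters are fixed, the state just before the $j$-th division is determined and exactly one value of the $j$-th letter triggers absorption. The crucial claim (i) is an equidistribution statement: as $N\to\infty$, the law of the quotient word --- for $(a_1,a_2)$ chosen uniformly in $[1,N]^2$ --- converges to the law of the Markov process that, as long as it has not yet been absorbed, draws its next letter uniformly at random and is thereby absorbed with probability $1/4$. Granting (i), that limiting process is absorbed in finite time almost surely, so for each fixed $k$ the probability that the quotient word has length at least $k$ tends to $(3/4)^{k-1}$; since a sequence that is not eventually division-free has an infinite quotient word, its limiting probability is $0$, which is exactly the conjecture.

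Everything therefore rests on (i), and I expect it to be the main obstacle. The natural attack is a counting argument modulo high powers of $5$, in the spirit of Lemma~\ref{thm:adjustement}: that lemma already shows that the signature and the residue string of the first $k$ terms are constant on each residue class modulo $5^{m}$, so (i) amounts to showing that, as $(a_1,a_2)$ ranges over $[1,N]^2$, these classes are hit with asymptotically equal frequency and that distinct quotient positions decouple --- a statement about the joint distribution of $5$-adic valuations along a linear recurrence. One should be warned that (i) is genuinely deep: it forces the union of the basins of attraction of all cycles to have density $0$ in $[1,N]^2$ (on such a basin the quotient word is eventually periodic, not random), and controlling cycle basins is a Collatz-type problem. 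A realistic intermediate target is therefore the conditional statement that the conjecture holds once the primitive $5$-free Fibonacci cycles are known to be sufficiently sparse --- in particular, if there are only finitely many of them --- with the equidistribution count supplying the remaining input.
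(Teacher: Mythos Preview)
The statement you are attempting is labeled a \emph{Conjecture} in the paper, and the paper offers no proof --- only the heuristic that after each division the new residue modulo $5$ is ``random,'' so with probability $1/4$ one lands in the Lucas cycle, and hence with many divisions one should almost surely be absorbed. Your proposal is a more careful and explicit version of exactly this heuristic: your mod-$5$ analysis (the four Lucas states as the unique absorbing set, the fact that every other nonzero state reaches a division within a few steps, and the observation that after a division exactly one of the four possible quotient residues lands you in the Lucas cycle) is correct and sharper than anything the paper writes down.

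Where you go beyond the paper is in isolating the missing ingredient as the precise equidistribution claim (i) and in being honest that (i) is the whole difficulty. That honesty is warranted: (i) is not proved in your write-up, it is not proved in the paper, and --- as you yourself note --- it is a Collatz-type statement whose truth would in particular force all cycle basins to have density zero. So what you have is not a proof of the conjecture but a clean reduction of it to (i), together with a plausible but unexecuted plan of attack via counting modulo powers of $5$ in the spirit of Lemma~\ref{thm:adjustement}. This matches the paper's stance: the conjecture remains open, and your claim (i) is precisely the probabilistic assumption the paper invokes without justification. Your conditional target (assuming finitely many primitive cycles) is a reasonable intermediate goal, but even that would still require the equidistribution count, which you have not carried out.
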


Let us remind you that division-free sequences are Fibonacci-like sequences. They grow like $\phi^n$, where $\phi$ is the golden ratio. It is not surprising that we get 1.61 as the growth ratio: the number close to the golden ratio, but slightly below it.

We explained why 1.61 is the exponent for $n=5$. What about other numbers?

Let us start by looking at the proportion of pairs that generate sequences not containing zeros. We submitted two new sequences to the OEIS:

\begin{itemize}
\item A232656 The number of pairs of numbers below $n$ that, when generating a Fibonacci-like sequence modulo $n$, contain zeros: 1, 4, 9, 16, 21, 36, 49, 40, 81, $\ldots$.
\item A232357 The number of pairs of numbers below $n$ that, when generating a Fibonacci-like sequence modulo $n$, do not contain zero: 0, 0, 0, 0, 4, 0, 0, 24, 0, $\ldots$.
\end{itemize}

The sum of the two sequences is the sequence of squares: $a(n) = n^2$: the total number of possible pairs of remainders modulo $n$. For our argument we are interested in the ratio A232357$(n)/(n-1)^2$: the proportion of pairs not containing zero that lead to division-free sequences. This is what we get starting from $n=2$:

\begin{quote}
0, 0, 0.25, 0, 0, 0.49, 0, 0.20, 0.20, 0.40, 0.58, 0, 0.18, 0.53, 0.56, 0.50, 0.11, 0.18, 0.72, 0.18, 0, 0.68, 0.18, 0.54, 0, 0.40, 0.57, 0.17, 0.067, 0.52, 0.57, 0.79, 0.17, 0.74, 0.53, 0.58, 0.52, 0.50, 0.55, 0.69, 0, 0.50, 0.17, 0.52, 0.70, 0.81, 0, 0.17, 0.52, 0.52, 0.52, 0.51, 0.86, 0.67, 0.52, 0.55, 0.034, 0.46, 0.78, 0.55, 0.75
\end{quote}

There is no clear pattern. For example, it drops significantly for $n=59$. Let us not get upset yet. If we look at the exact number A232357$(59) = 116 = 2 \cdot 58$. Is it true that every time we divide the probability of getting into a division-free sequence is the same?

Suppose the term of the $n$-free Fibonacci sequence before division has remainder $r$. The probability that we crossover to a division-free sequence is $f(r)/(n-1)$, where $f(r)$ is the number of possible remainders that follow $r$ in division-free sequences.

\begin{lemma}
If $n$ is prime, then $f(r)$ is constant, for $r \neq 0$.
\end{lemma}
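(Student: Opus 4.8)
The plan is to exploit the fact that, for prime $n$, the remainders modulo $n$ form the field $\mathbb{F}_n$, so multiplication by any nonzero scalar is a bijection on nonzero remainders and commutes with the Fibonacci recurrence. Recall that $f(r)$ counts the number of nonzero remainders $s$ such that the Fibonacci-like sequence starting $r, s, \ldots$ (taken modulo $n$) never hits $0$; equivalently, $s$ is such that the pair $(r,s)$ lies in one of the ``short'' cycles of the Fibonacci map on $\mathbb{F}_n^2 \setminus \{(0,0)\}$ that avoid the zero-containing cycle through $(1,1)$.

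First I would set up the action: fix a nonzero remainder $c \in \mathbb{F}_n^*$ and consider the map $(a,b) \mapsto (ca, cb)$ on pairs of remainders. Because the Fibonacci recurrence $a_{k} = a_{k-1} + a_{k-2}$ is linear, scaling the initial pair by $c$ scales the entire sequence by $c$; in particular, a term is $\equiv 0 \pmod n$ in the scaled sequence if and only if the corresponding term was $\equiv 0$ in the original (here we use that $n$ is prime, so $c$ has no zero divisors). Hence the scaling map sends division-free pairs to division-free pairs bijectively, and likewise for zero-containing pairs.

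Next I would use this to transport the count $f(r)$ between different values of $r$. Given two nonzero remainders $r$ and $r'$, choose $c = r' r^{-1} \in \mathbb{F}_n^*$ (possible since $n$ is prime and $r \neq 0$). Then $(a,b) \mapsto (ca, cb)$ restricts to a bijection between $\{(r,s) : s \neq 0,\ (r,s) \text{ division-free}\}$ and $\{(r', s') : s' \neq 0,\ (r',s') \text{ division-free}\}$, since it sends first coordinate $r$ to $cr = r'$ and preserves the division-free property by the previous paragraph. The cardinalities of these two sets are exactly $f(r)$ and $f(r')$, so $f(r) = f(r')$. Since $r, r'$ were arbitrary nonzero remainders, $f$ is constant on $\mathbb{F}_n \setminus \{0\}$.

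The argument is essentially a one-line orbit-counting observation once the setup is in place, so there is no real obstacle; the only point requiring care is the reduction of ``$n$-free Fibonacci sequence division-free'' to ``Fibonacci-like sequence modulo $n$ avoids $0$'' — but this is precisely the remark from Section~\ref{sec:definitions} that an $n$-free sequence coincides with the Fibonacci-like sequence until a multiple of $n$ appears, combined with the fact established in Section~\ref{sec:5free} and Section~\ref{sec:division-free} that once a Fibonacci-like sequence over $\mathbb{F}_n$ avoids $0$ it does so forever (its pair of remainders cycles through a zero-free orbit). One should also note $f(r)$ need not equal the naive expectation: the lemma only asserts constancy, and indeed $f(r)(n-1)$ will typically be a proper divisor of the number of division-free pairs, consistent with the numerical observation that $\mathrm{A}232357(59) = 2 \cdot 58$.
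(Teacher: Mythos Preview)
Your proof is correct and uses essentially the same idea as the paper: the scaling action of $\mathbb{F}_n^*$ on pairs of remainders, which preserves the Fibonacci recurrence and (since $n$ is prime) the property of avoiding zero. The paper applies this action to the zero-containing pairs (scalar multiples of the Fibonacci segment up to the entry point) and then passes to the complement, whereas you apply the bijection $s\mapsto (r'r^{-1})s$ directly to the division-free pairs; this is a minor presentational difference, and your version is arguably the cleaner of the two.
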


\begin{proof}
Consider the Fibonacci sequence modulo $n$ between 0 and the first entry point. Multiply this sequence by all numbers below $n$. Pick the set of all possible pairs we get. Every non-zero number will be in this set of pairs the same number of times. But these are the pairs that lead to divisions. That means that in the set of pairs that lead to division-free sequences each remainder over zero is contained there the same number of times. As each remainder can not be followed by the same number of times, $f(r)$ must be constant.
\end{proof}

The previous lemma shows that the argument we provided for 5 works for every prime number that is not a Fibonacci omni-factor. Each time we divide, we crossover into a division-free sequence with the same probability. 

But what about composite numbers that are not omni-factors? We know that the cycles of Fibonacci-like sequences modulo a composite number might be different length. The number of different cycles modulo $n$ is A015134. Correspondingly, A015135 is the number of different period lengths.

Let us look at the smallest possible case of a composite non omni-factor: $n=8$.

Let us check all possible Fibonacci-like sequences modulo 8. There is a cycle 0, 1, 1, 2, 3, 5, 0. This cycle is of length 6. If we multiply it by 2, 3, 5, 6, or 7 we get 5 more cycles of length 6. There is a cycle 0, 4, 4, 0 of length 3. There is a trivial cycle 0, 0 of length 1.

We also have a cycle corresponding to Lucas numbers 1, 3, 4, 7, 3, 2, 5, 7, 4, 3, 7, 2, 1, 3, $\ldots$ of length 12. There is also another cycle, that is a multiple of this cycle: 1, 4, 5, 1, 6, 7, 5, 4, 1, 5, 6, 3, 1, 4, $\ldots$. These are two cycles of length 12. There is no way each of the 7 possible non-zero remainders participates in these cycles the same number of times.

Table~\ref{tbl:divfreepairs} shows, given a remainder, what the next remainder should be if we are inside a division-free sequence. 

\begin{table}[htbp]
\begin{center}
\begin{tabular}{| c | c |}
\hline
1 & 3, 4, 5, 6 \\
2 & 1, 5 \\
3 & 1, 2, 4, 7 \\
4 & 1, 3, 5, 7 \\
5 & 1, 4, 6, 7 \\
6 & 3, 7\\
7 & 2, 3, 4, 5\\
\hline
\end{tabular}
\caption{Remainders in division-free pairs.}\label{tbl:divfreepairs}
\end{center}
\end{table}

We see that numbers 1, 3, 4, 5, 7 correspond to 4 possibilities, and numbers 2 and 6 to two possibilities.

That means the probability that we crossover to a division-free sequence depends on the previous remainder. But the important part is that it is never zero, because each remainder has at list two numbers that follow it.

We can assume that with each division we crossover to a division-free sequence with some probability that is bounded from below. From this assumption we get a conjecture. 

\begin{conjecture}
For any $n$ such that a division-free sequence exists, if we pick the starting integers in the range from 1 to $N$ the probability that we eventually end up in a division-free $n$-free Fibonacci sequence tends to 1 when $N$ tends to $\infty$.
\end{conjecture}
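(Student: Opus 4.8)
This conjecture, like the Collatz statement, seems out of reach of a direct proof, and the plan below instead replaces the deterministic iteration by idealized randomness. What I would aim for is a rigorous \emph{reduction}: granting (i) the standing probabilistic assumptions of this paper --- after each division the resulting residue modulo $n$ is uniform over the $n-1$ nonzero classes, the extracted exponent $i$ of $n$ satisfies $\Pr[i=k]\approx(1-1/n)n^{-(k-1)}$, and successive divisions are independent --- and (ii) the hypothesis, consistent with all our computations, that the entries of every primitive $n$-free cycle are bounded by some $B=B(n)$, the conjecture should follow. Two ingredients are needed: a positive lower bound on the probability of ``crossover'' at each division, and a lower bound, growing with $N$, on the number of divisions that can occur before a cycle is reached.

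First I would make \emph{crossover} precise. Every term after the first two is not divisible by $n$, so the pair of consecutive residues modulo $n$ always lies in $((\mathbb{Z}/n)\setminus\{0\})^2$ and evolves by the bijection $(x,y)\mapsto(y,x+y)$ of $(\mathbb{Z}/n)^2$, whose order is the Pisano period $\pi(n)$. A division occurs exactly when the orbit reaches a pair with a zero coordinate. Hence either the current orbit is \emph{zero-free}, in which case no division ever occurs again and the tail is a division-free (Fibonacci-like) sequence --- a crossover --- or the orbit contains a pair with a zero coordinate and reaches it within $\pi(n)$ steps, forcing the next division. Writing $f(r)$ for the number of residues $r'$ with $(r,r')$ on a zero-free orbit, the lemma above gives that $f$ is a positive constant when $n$ is prime, and Table~\ref{tbl:divfreepairs} gives $f(r)\ge 2$ for every nonzero $r$ when $n=8$; the first sub-goal is to show $f(r)\ge 1$ for every nonzero $r$ and every composite non-omni-factor $n$, by analyzing which nonzero classes lie on zero-free orbits. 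Under the uniformity assumption this makes the conditional probability of crossover at any given division at least $\delta:=\min_{r\neq 0}f(r)/(n-1)>0$, independent of the past.

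Second I would lower bound the number $D$ of divisions that occur before crossover or before a cycle is reached, assuming no crossover. In the non-crossover regime consecutive divisions are at most $\pi(n)$ steps apart, each step at most doubling the running scale $\log_n\max\{a_{k-1},a_k\}$, while each division lowers that scale by roughly its exponent $i_j$. Descending from a scale of order $\log_n N$ into a primitive cycle, whose scale is $O(1)$, therefore forces $\sum_{j\le D}i_j$ to be of order at least $\log_n N$ after subtracting a slack of order $D$; since a sum of $D$ independent geometric exponents is itself of order $D$ with probability tending to $1$, this forces $D$ to be of order at least $\log_n N$, hence $D\to\infty$ as $N\to\infty$. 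Combined with the previous paragraph, the probability of performing $D$ divisions without ever crossing over is at most $(1-\delta)^D\to 0$, so with probability tending to $1$ the sequence crosses over into a division-free sequence --- which is exactly the conjecture.

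The chief obstacle is the legitimacy of the model: the deterministic dynamics are being treated as a stream of independent coin tosses, precisely the leap the paper elsewhere declares intractable. Even inside the model two points stay genuinely conjectural, and failure of either breaks the argument for the corresponding $n$: the uniform bound $B(n)$ on primitive-cycle entries (without which $D$ need not grow), and the residue-coverage claim $f(r)\ge 1$ for composite $n$ (without which $\delta$ could vanish). Thus what the plan delivers is not a proof but a clean reduction of the conjecture to these two hypotheses together with the paper's standing probabilistic assumptions.
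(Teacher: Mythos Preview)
This statement is a \emph{conjecture} in the paper, not a theorem, so there is no proof to compare against---only a short heuristic paragraph. That heuristic is precisely the skeleton of your proposal: the paper observes (via the lemma on primes and the $n=8$ table) that the crossover probability at each division is bounded below, assumes that primitive cycles contain only small numbers so that starting from large inputs forces many divisions en route to any cycle, and concludes informally that crossover is then almost certain. Your write-up is a more careful elaboration of exactly this line: you make the crossover event and $f(r)$ explicit, isolate $\delta=\min_r f(r)/(n-1)$, and add a quantitative sketch (absent from the paper) of why the division count $D$ must grow like $\log_n N$ under the bounded-cycle hypothesis, giving the $(1-\delta)^D\to 0$ conclusion.

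Where you go further than the paper is in honestly flagging the two genuinely open sub-claims: that $f(r)\ge 1$ for every nonzero residue when $n$ is composite (the paper checks only $n=8$ and then asserts ``it is never zero''), and that primitive-cycle entries are uniformly bounded. Both the paper and you agree that the independence/uniformity model for residues after division is the real obstruction to a proof; your framing as a conditional reduction is the right posture and matches what the paper is actually claiming.
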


This conjecture explains why we get 1.61 as a growth estimate for non-omni-factors in our data. Now we want to explain other numbers in the next section.

\section{Growth rates for omni-factors}\label{sec:growthomni}

Table~\ref{tbl:exp} shows the exponents that are not equal to 1.61. We call them the deviated exponents and they appear when $n$ is an omni-factor.

\begin{table}[htbp]
\begin{center}
\begin{tabular}{| l | r rrrrrrrr|}
\hline
$n$ & 4 & 6 & 7 & 9 & 14 & 23 & 27 & 43 & 49\\
growth & 1.32 & 1.42 & 1.34 & 1.4 & 1.49 & 1.48 & 1.53 & 1.54 & 1.56\\
\hline
\end{tabular}
\caption{Deviated exponents.}\label{tbl:exp}
\end{center}
\end{table}

It is not surprising that these numbers are smaller than the golden ratio. Indeed, in every sequence we divide by a power of $n$ an infinite number of times.

Moreover, we can extend the reasoning from Lemma~\ref{thm:average3} and Lemma~\ref{thm:average4} to see that each time we divide, we divide on average by $n^{n/(n-1)}$. The number we divide by grows with $n$, but the numbers in the second row of Table~\ref{tbl:exp} do not decrease. To explain this we need to see how often we divide. If we start with a pair of remainders, what is the average number of steps we need to make to get to zero? If we take all the pairs from the same cycle, then the average would be about half of the length of the cycle.

The total average is the sum of squares of cycle lengths divided by the total number of pairs and by 2. This is not an integer sequence, but we submitted the rounded number and it is now sequence A233248. To keep the memory of the precise number we submitted the sum of the squares of cycle lengths as sequence A233246.

We can approximate the average number of steps to the next division as $Z(n)/2$, but this is not precise. For example, we saw before that for 4-free numbers the number of steps until the next division is 1, 3, or 4. So the average is 8/3. It is easy to calculate this number when $n$ is prime. After the division we get a number that is not divisible by $n$, and the previous number is not divisible by $n$. We can assume that all such pairs are equally probable and the average number of steps is $(Z(n)-1)/2$. For non-prime numbers we can calculate this explicitly keeping in mind that before the division both numbers are coprime with $n$. (We can probabilistically argue that eventually elements of a sequence become coprime.)

If $a$ is the average number of steps until the next division, then the estimated average division is by $n^{2n/(n-1)a}$.

We combined these numbers in Table~\ref{tbl:growthexplanation}. The third row is the entry points, the fourth row is the average number of steps until the next division. The fifth row is the calculated average division per step. 

The last row in the table needs a separate explanation. Suppose $d$ is the average division per step. We took a recurrence defined as: $x_n = (x_{n-1} + x_{n-2})/d$ and calculated its growth, which is the last row.

\begin{table}[htbp]
\begin{center}
\begin{tabular}{| l | r rrrrrrrr|}
\hline
$n$ & 4 & 6 & 7 & 9 & 14 & 23 & 27 & 43 & 49\\
experimental growth & 1.32 & 1.42 & 1.34 & 1.4 & 1.49 & 1.48 & 1.53 & 1.54 & 1.56\\
$Z(n)$ & 6 & 12 & 8 & 12 & 24 & 24 & 36 & 44 & 56\\
$a$ & 8/3 & 6 & 7/2 & 45/8 & 154/13 & 23/2 & 459/26 & 43/2 & 441/16\\
average division & 2.00 & 1.43 & 1.91 & 1.55 & 1.27 & 1.33 & 1.21 & 1.20 & 1.16\\
recurrence growth & 1 & 1.26 & 1.03 & 1.19 & 1.36 & 1.32 & 1.41 & 1.42 & 1.46\\
\hline
\end{tabular}
\caption{Estimated growth.}\label{tbl:growthexplanation}
\end{center}
\end{table}

The last line, the recurrence growth and the fourth line---the average number of steps until division---are strongly correlated with the experimental growth.

\bigskip
\hrule
\bigskip

\noindent 2010 {\it Mathematics Subject Classification}: Primary 11B39; Secondary 11B50.

\noindent \emph{Keywords: } Fibonacci numbers, Lucas numbers, divisibility, entry points.

\bigskip
\hrule
\bigskip

\noindent
(Mentions A000032, A000045, A000057, A000285, A001175, A001177, A015134, A015135, A001602, A060305, A064362, A064414, A065156,  A078414, A214684, A224382. New sequences A230359, A230457, A232656, A232357, A232658, A232666, A233246, A233248, A233525, A233526)

\bigskip
\hrule
\bigskip


\begin{thebibliography}{9}

\bibitem{OEIS} \emph{The On-Line Encyclopedia of Integer Sequences},  published electronically at \url{http://oeis.org}, 2012.

\bibitem{CR} Paul Cubre, Jeremy Rouse, \emph{Divisibility properties of the Fibonacci entry point}, Number Theory arXiv:1212.6221, 2012.

\bibitem{GKS} Richard K. Guy, Tanya Khovanova, Julian Salazar, \emph{Conway's subprime Fibonacci sequences}, Number Theory arXiv:1207.5099, 2012.

\bibitem{HW} Hardy, G. H.; Wright, E. M. (1938), \emph{An Introduction to the Theory of Numbers} (Sixth ed.), Oxford University Press, Oxford 2008.

\bibitem{La10} Jeffrey C.~Lagarias, \emph{The Ultimate Challenge: The $3x+1$ Problem}, Amer.\ Math.\ Soc., 2010
\bibitem{Va89} S. Vajda. \emph{Fibonacci \& Lucas Numbers, and the Golden Section: Theory and Applications}, Ellis Horwood Ltd., Chichester, England, 1989.

\bibitem{W} Morgan Ward, \emph{The characteristic number of a sequence of integers satisfying a linear recursion relation}, Trans. Am. Math. Soc. 33 (1): 153--165, 1931

\end{thebibliography}
\end{document}